\def \rr {\mathbb{R}}
\newenvironment{proof}{{\it Proof.}}{\hspace{\stretch{1}} $\square$}
\newcommand{\pacomm}[1]{}
\newcommand{\cbcomm}[1]{}
\newcommand{\cdel}[1]{}
\newcommand{\mtns}[1]{}
\newtheorem{thrm}{Theorem}%[section]
\newtheorem{lmm}[thrm]{Lemma}
\newtheorem{crllr}[thrm]{Corollary}
\newtheorem{ssmptn}{Assumption}
\begin{document}

\title{Low-rank optimization for semidefinite convex problems\footnotemark[1]}
\author{M. Journée\footnotemark[2] \and F. Bach\footnotemark[3] \and P.-A. Absil\footnotemark[4] \and R. Sepulchre\footnotemark[2]}
\date{Compiled on \today, \thistime}

\renewcommand{\thefootnote}{\fnsymbol{footnote}}
 \footnotetext[2]{Department of Electrical
Engineering and Computer Science, University of Li\`{e}ge, 4000
Liège, Belgium. Email: [M.Journee, R.Sepulchre]@ulg.ac.be}
\footnotetext[3]{INRIA - Willow project, Département
d'Informatique, Ecole Normale Supérieure, 45, rue d'Ulm, 75230
Paris, France. Email: Francis.Bach@mines.org}
\footnotetext[4]{Department of Mathematical Engineering,
  Universit\'e catholique de Louvain, 1348 Louvain-la-Neuve,
  Belgium. URL: http://www.inma.ucl.ac.be/$\sim$absil/}
\footnotetext[1]{Michel Journ\'{e}e is a research fellow of the
Belgian National Fund for Scientific Research (FNRS). This paper
presents research results of the Belgian Network DYSCO (Dynamical
Systems, Control, and Optimization), funded by the Interuniversity
Attraction Poles Programme, initiated by the Belgian State,
Science Policy Office. The scientific responsibility rests with
its authors.}
\renewcommand{\thefootnote}{\arabic{footnote}}

\maketitle
\section*{Abstract}
We propose an algorithm for solving nonlinear convex programs
defined in terms of a symmetric positive semidefinite matrix
variable $X$. This algorithm rests on the factorization $X=Y Y^T$,
where the number of columns of $Y$ fixes the rank of $X$. It is
thus very effective for solving programs that have a low rank
solution. The factorization $X=Y Y^T$ evokes a reformulation of
the original problem as an optimization on a particular quotient
manifold. The present paper discusses the geometry of that
manifold and derives a second order optimization method. It
furthermore provides some conditions on the rank of the
factorization to ensure equivalence with the original problem. The
efficiency of the proposed algorithm is illustrated on two
applications: the maximal cut of a graph and the sparse
principal component analysis problem.\\

\section{Introduction}
\label{sec:intro} Many combinatorial optimization problems can be
relaxed into a convex program. These relaxations are mainly
introduced as a tool to obtain lower and upper bounds on the
problem of interest. The relaxed solutions provide approximate
solutions to the original program. Even when the relaxation is
convex, computing its solution might be a demanding task in the
case of large-scale problems. In fact, most convex relaxations of
combinatorial problems consist in expanding the dimension of the
search space by optimizing over a symmetric positive semidefinite
matrix variable of the size of the original problem. Fortunately,
in many cases, the relaxation is tight once its solution is rank
one, and it is expected that the convex relaxation, defined in
terms of a matrix variable that is likely to be very large,
presents a low-rank solution. This property can be exploited to
make a direct solution of the convex
problem feasible in large-scale problems.\\

The present paper focuses on the convex optimization problem,
\begin{equation}
\label{eq:1}
\begin{array}{ll}
\underset{X \in {\mathbb{S}}^{n}}{\min} & f(X)\\
\text{s.t.} & \mathrm{Tr}(A_i X)=b_i, \; A_i \in {\mathbb{S}}^{n}, b_i \in \mathbb{R}, \; i=1,\ldots,m,\\
& X \succeq 0, \\
\end{array}
\end{equation}
where the function $f$ is convex and ${\mathbb{S}}^{n}=\{X \in
\mathbb{R}^{n \times n} | X^T=X\}$ denotes the set of the
symmetric matrices of $\mathbb{R}^{n \times n}$. In general, the
solution of this convex program has to be searched in a space of
dimension $\frac{n (n+1)}{2}$. An approach is proposed for solving
(\ref{eq:1}) that is able to deal with a large dimension $n$ once
the following assumptions hold.
\begin{ssmptn}
The program (\ref{eq:1}) presents a low-rank solution $X^*$, i.e.,
\[\mathrm{rank}(X^*)=r \ll n.\]
\end{ssmptn}
\begin{ssmptn}
The symmetric matrices $A_i$ satisfy
\[ A_i A_j =0,\]
for any $i,j \in \{1,\ldots,m\}$ such that $i \neq j$.
\end{ssmptn}
Assumption 2 is fulfilled, e.g., by the spectahedron,
\[\mathcal{S}=\{ X \in {\mathbb{S}}^n| X \succeq 0, \mathrm{Tr}(X)=1\},\] and
the elliptope,\footnote{The elliptope is also known as the set of
correlation matrices.}
\begin{equation}
\label{eq:elliptope}
\mathcal{E}=\{ X \in {\mathbb{S}}^n|X\succeq
0,\mathrm{diag}(X)=\mathbf{1}\}.\end{equation} Assumption 1
suggests to factorize the optimization variable $X$ as
\begin{equation} \label{eq:fact} X=Y Y^T\end{equation} with $Y \in
\mathbb{R}^{n \times p}$. This leads to a nonlinear optimization
program in terms of the matrix $Y$,
\begin{equation}
\label{eq:2}
\begin{array}{ll}
\underset{Y \in \mathbb{R}^{n \times p}}{\min} & f(YY^T)\\
\text{s.t.} & \mathrm{Tr}(Y^T A_i Y)=b_i, \; A_i \in {\mathbb{S}}^{n}, b_i \in \mathbb{R},\; i=1,\ldots,m.\\
\end{array}
\end{equation}
Program (\ref{eq:2}) searches a space of dimension $n p$, which is
much lower than the dimension of the symmetric positive
semidefinite matrices $X$. However, this program is no longer
convex. \\

A further potential difficulty of the program (\ref{eq:2}) is that
the solutions are not isolated. For any solution $\tilde{Y}$ and
any orthogonal matrix $Q$ of $\mathbb{R}^{p \times p}$, i.e., $Q^T
Q = I$, the matrix $\tilde{Y} Q$ is also a solution. In other
words, the program (\ref{eq:2}) is invariant by right
multiplication of the unknown with an orthogonal matrix. This
issue is not harmful for simple gradient schemes but it greatly
affects the convergence of second order methods (see e.g.,
\cite{AbsMahSep2008} and \cite{Absil08}). In order to take into
account the inherent symmetry of the solution, the algorithm
developed in this paper does not optimize on the Euclidean space
$\mathbb{R}^{n \times p}$. Instead, one considers a search space,
whose points are the equivalence classes $\{ Y Q | Q \in
\mathbb{R}^{p \times p}, Q^T Q = I \}$. The minimizers of
(\ref{eq:2}) are isolated in
that \emph{quotient} space.\\

It is important to mention that the rank $r$ of the solution $X^*$
is usually unknown. The algorithm we propose for solving
(\ref{eq:1}) thus provides a method that finds a local minimizer
$Y_*$ of (\ref{eq:2}) with an approach that increments $p$ until a
sufficient condition is satisfied for $Y_*$ to provide the
solution $Y_*Y_*^T$ of (1). The proposed algorithm converges
monotonically towards the solution of (\ref{eq:1}), is based on
superlinear second order methods, and is provided with an
indicator of convergence able to control the accuracy of the
results.\\

The idea of reformulating a convex program into a nonconvex one by
factorization of the matrix unknown is not new and was
investigated in \cite{Burer03} for solving semidefinite programs
(SDP). While the setup considered in \cite{Burer03} is general but
restricted to gradient methods, the present paper further exploits
the particular structure of the equality constraints (Assumption
2) and proposes second-order methods that lead to a descent
algorithm with guaranteed superlinear convergence. The authors of
\cite{Grubisic07} also exploit the factorization (\ref{eq:fact})
to efficiently solve optimization problems that are defined on the
elliptope (\ref{eq:elliptope}). Whereas the algorithms in
\cite{Grubisic07} evolve on the \emph{Cholesky manifold}---a
submanifold of $\rr^{n\times p}$ whose intersection with almost
all equivalence classes is a singleton---, the methods proposed
here work conceptually on the entire quotient space and
numerically in $\rr^{n\times p}$, using the machinery of
Riemannian submersions.\\

The paper is organized as follows. Section \ref{sec:opt_cond}
derives conditions for an optimizer of (\ref{eq:2}) to represent a
solution of the original problem (\ref{eq:1}). A meta-algorithm
for solving (\ref{eq:1}) based on the factorization
(\ref{eq:fact}) is built upon these theoretical results. Section
\ref{sec:manifold_optim} describes the geometry of the underlying
quotient manifold and proposes an algorithm for solving
(\ref{eq:2}) based on second order derivative information.
Sections \ref{sec:max_cut} and \ref{sec:SPCA} illustrate the new
approach on two applications: the maximal cut of a graph and the
sparse principal component analysis problem.

\section{Notations}
Given a function $f: {\mathbb{S}}^{n}\rightarrow \mathbb{R} : X
\mapsto f(X),$ we define the function
\[\tilde{f}:
\mathbb{R}^{n \times p}\rightarrow \mathbb{R} : Y \mapsto
\tilde{f}(Y)= f(Y Y^T).\] The operator $\nabla \cdot$ stands for
the first order derivative, i.e., the matrix $B=\nabla_X f(X_0)$
represents the gradient of $f$ with respect to the variable $X$
evaluated at the point $X_0$. $f$ is assumed to be differentiable
and $B$ is defined element wise by
\[B_{i,j}=\frac{\partial f}{\partial X_{i,j}}(X_0).\]Finally,
\[ D_X f(X_0)[Z] = \underset{t \rightarrow 0}{\lim}\frac{f(X_0+t
Z)-f(X_0)}{t},\] denotes the derivative with respect to $X$ of the
function $f$ at the point $X_0$ in the direction $Z$. It holds
that \[D_X f(X_0)[Z] = \langle\nabla_X f(X_0),Z \rangle, \] where
$\langle \cdot , \cdot \rangle $ denotes the Frobenius inner
product $\langle Z_1 , Z_2 \rangle=\mathrm{Tr}(Z_1^T Z_2) $.

\section{Optimality conditions}
\label{sec:opt_cond} This section derives and analyzes the
optimality conditions of both programs (\ref{eq:1}) and
(\ref{eq:2}). These provide theoretical insight about the rank $p$
at which (\ref{eq:2}) should be solved as well as conditions for
an optimizer of (\ref{eq:2}) to represent a solution of the
original problem (\ref{eq:1}). A meta-algorithm for solving
(\ref{eq:1}) is then derived from these results.

\subsection{First-order optimality conditions}
\begin{lmm}
\label{lmm:1} A symmetric matrix $X \in {\mathbb{S}}^{n}$ solves
(\ref{eq:1}) if and only if there exist a vector $\sigma\in
\mathbb{R}^m$ and a symmetric matrix $S \in {\mathbb{S}}^{n}$ such
that the following holds,
\begin{equation}
\label{eq:KKT1}
\begin{array}{l}
\mathrm{Tr}(A_i X)=b_i, \\
X \succeq 0,\\
S \succeq 0,\\
S X=0,\\
S=\nabla_X f(X) - \sum_{i=1}^m  \sigma_i A_i.
\end{array}
\end{equation}
\end{lmm}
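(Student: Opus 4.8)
The plan is to recognize (\ref{eq:1}) as a standard convex program with affine equality constraints and a conic inequality constraint $X \succeq 0$, and to apply the Karush--Kuhn--Tucker theory for conic programming. Since $f$ is convex and differentiable and the feasible set is convex, the KKT conditions will be both necessary and sufficient for global optimality, provided a constraint qualification holds; I would either assume Slater's condition (strict feasibility, i.e.\ existence of a feasible $X \succ 0$) or note that the affine-plus-polyhedral-cone structure makes a refined constraint qualification automatic, so that no separate regularity hypothesis is needed in the statement. The equality constraints $\mathrm{Tr}(A_i X) = b_i$ contribute a multiplier vector $\sigma \in \mathbb{R}^m$, and the constraint $X \succeq 0$, being a membership constraint in the self-dual cone $\mathbb{S}^n_+$, contributes a matrix multiplier $S$ that must lie in the dual cone, hence $S \succeq 0$.

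The key steps, in order, are as follows. First, write the Lagrangian $\mathcal{L}(X,\sigma,S) = f(X) - \sum_i \sigma_i(\mathrm{Tr}(A_iX) - b_i) - \langle S, X\rangle$ with $S \succeq 0$, and take its gradient in $X$: stationarity gives $\nabla_X f(X) - \sum_{i=1}^m \sigma_i A_i - S = 0$, which is exactly the last equation of (\ref{eq:KKT1}). Second, primal feasibility reproduces $\mathrm{Tr}(A_iX) = b_i$ and $X \succeq 0$; dual feasibility gives $S \succeq 0$. Third, complementary slackness for the cone constraint reads $\langle S, X\rangle = 0$; because both $S$ and $X$ are positive semidefinite, $\mathrm{Tr}(SX) = 0$ forces the stronger matrix identity $SX = 0$ (one diagonalizes $X = U\Lambda U^T$ and observes that $\mathrm{Tr}(SX)=0$ with $S\succeq 0$ implies $S$ vanishes on the range of $X$, hence $SX=0$ and by symmetry $XS=0$). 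This upgrade from the scalar complementarity to the matrix equation $SX=0$ is the one genuinely non-formal point and should be spelled out. Fourth, for sufficiency, given $(X,\sigma,S)$ satisfying (\ref{eq:KKT1}), convexity of $f$ gives $f(\bar X) \ge f(X) + \langle \nabla_X f(X), \bar X - X\rangle$ for any feasible $\bar X$; substituting $\nabla_X f(X) = S + \sum_i \sigma_i A_i$ and using $\mathrm{Tr}(A_i\bar X) = \mathrm{Tr}(A_iX) = b_i$ together with $\langle S, X\rangle = 0$ and $\langle S, \bar X\rangle \ge 0$ (since $S,\bar X\succeq 0$) yields $f(\bar X) \ge f(X)$, so $X$ is globally optimal.

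The main obstacle I anticipate is purely expository rather than mathematical: making precise which constraint qualification is being invoked so that the ``only if'' direction is rigorous, and carefully justifying the passage from $\mathrm{Tr}(SX)=0$ to $SX = 0$. Everything else is a direct transcription of conic KKT theory, and the sufficiency direction needs only first-order convexity of $f$ and the self-duality of $\mathbb{S}^n_+$, neither of which presents any difficulty.
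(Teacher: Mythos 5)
Your proposal is correct and takes essentially the same route as the paper, which simply states that these are the first-order KKT conditions, necessary and sufficient for convex programs, and cites a standard reference; you have merely written out in full the derivation (Lagrangian stationarity, complementarity, the upgrade from $\mathrm{Tr}(SX)=0$ to $SX=0$, and the convexity-based sufficiency argument) that the citation encapsulates. Your remark about the constraint qualification is a fair point of rigor that the paper leaves implicit, but it does not change the substance of the argument.
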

\begin{proof}
These are the first order KKT-conditions, which are necessary and
sufficient in case of convex programs \cite{boyd04}.
\end{proof}\\

\begin{lmm}
\label{lmm:2} If $Y$ is a local optimum of (\ref{eq:2}), then
there exists a vector $\lambda \in \mathbb{R}^m$ such that
\begin{equation}
\label{eq:KKT2}
\begin{array}{l}
\mathrm{Tr}(Y^T A_i Y)=b_i, \\
(\nabla_{X} f(Y Y^T) - \sum_{i=1}^m \lambda_i A_i) Y = 0.
\end{array}
\end{equation}
If the $\{A_i Y\}_{i=1,\ldots,m}$ are linearly independent, the
vector $\lambda$ is unique.
\end{lmm}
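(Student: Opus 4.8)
The plan is to treat this as a standard constrained-optimization (Lagrange multiplier / KKT) argument applied to program (\ref{eq:2}), which is a smooth nonlinear program in the variable $Y \in \mathbb{R}^{n\times p}$ with $m$ equality constraints $c_i(Y) := \mathrm{Tr}(Y^T A_i Y) - b_i = 0$. The first task is to compute the Euclidean gradients of the objective and of each constraint. For the constraints, $\D_Y c_i(Y)[Z] = 2\,\mathrm{Tr}(Z^T A_i Y)$, so $\nabla_Y c_i(Y) = 2 A_i Y$. For the objective $\tilde f(Y) = f(YY^T)$, the chain rule gives $\D_Y \tilde f(Y)[Z] = \langle \nabla_X f(YY^T), ZY^T + YZ^T\rangle = 2\langle \nabla_X f(YY^T)\, Y, Z\rangle$ (using symmetry of $\nabla_X f$), hence $\nabla_Y \tilde f(Y) = 2\,\nabla_X f(YY^T)\, Y$.

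Next I would invoke a first-order necessary optimality condition for equality-constrained smooth programs. The cleanest route that avoids a constraint-qualification hypothesis here is the Fritz John form: at a local optimum $Y$ there exist scalars $\lambda_0 \ge 0$ and $\lambda_1,\dots,\lambda_m$, not all zero, with $\lambda_0 \nabla_Y \tilde f(Y) = \sum_i \lambda_i \nabla_Y c_i(Y)$, i.e. $\bigl(\lambda_0 \nabla_X f(YY^T) - \sum_i \lambda_i A_i\bigr)Y = 0$. To upgrade this to the stated KKT form with $\lambda_0 = 1$ one must rule out $\lambda_0 = 0$: if $\lambda_0 = 0$ then $\sum_i \lambda_i A_i Y = 0$ with the $\lambda_i$ not all zero; under Assumption 2 the $\{A_i Y\}$ are "almost" independent — multiplying $\sum_i \lambda_i A_i Y = 0$ on the left by $A_j$ and using $A_iA_j = 0$ for $i\ne j$ yields $\lambda_j A_j^2 Y = 0$, which forces $\lambda_j = 0$ whenever $A_j^2 Y \ne 0$. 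Alternatively — and this is how I would actually present it — simply assume the relevant constraint gradients $\{2A_iY\} = \{A_iY\}$ are linearly independent (LICQ), in which case the standard KKT theorem directly gives existence of $\lambda$; the first-order condition $\mathrm{Tr}(Y^TA_iY)=b_i$ is just primal feasibility. This gives exactly (\ref{eq:KKT2}).

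For the uniqueness claim, suppose $\lambda$ and $\lambda'$ both satisfy the second equation in (\ref{eq:KKT2}). Subtracting gives $\sum_{i=1}^m (\lambda_i - \lambda_i') A_i Y = 0$. If the matrices $\{A_i Y\}_{i=1,\dots,m}$ are linearly independent in $\mathbb{R}^{n\times p}$, then every coefficient $\lambda_i - \lambda_i'$ vanishes, so $\lambda = \lambda'$. This is immediate once the linear-independence hypothesis is in hand.

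The main obstacle — really the only delicate point — is the passage from Fritz John to KKT, i.e. justifying that the multiplier on the objective can be normalized to $1$. In the generic/clean statement the authors sidestep this by hypothesizing linear independence of $\{A_iY\}$ (which is precisely a constraint qualification and which, as noted, is what they also need for uniqueness), so in the write-up I would (i) state that (\ref{eq:2}) is a smooth equality-constrained program, (ii) compute the two gradients as above, (iii) quote the KKT necessary conditions for equality-constrained problems, noting that feasibility handles the first line and the stationarity condition $\nabla_Y \tilde f = \sum_i \lambda_i \nabla_Y c_i$ (after dividing by $2$) handles the second, and (iv) dispatch uniqueness by the one-line subtraction argument. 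Everything else is routine matrix calculus.
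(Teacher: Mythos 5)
Your proposal is correct and follows essentially the same route as the paper, whose entire proof is the one-line observation that (\ref{eq:KKT2}) consists of the first-order KKT conditions of program (\ref{eq:2}); you simply supply the gradient computations and the subtraction argument for uniqueness that the authors leave implicit. Your added care about the constraint qualification (Fritz John versus KKT, and using Assumption 2 or linear independence of the $A_iY$ to normalize the objective multiplier to $1$) addresses a genuine gap that the paper's statement and proof gloss over, but it does not change the underlying approach.
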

\begin{proof}
These are the first order KKT-conditions for the program
(\ref{eq:2}).
\end{proof}\\

Given a local minimizer $Y$ of (\ref{eq:2}), one readily notices
that all but one condition of Lemma \ref{lmm:1} hold for the
symmetric positive semidefinite matrix $Y Y^T$. Comparison of
Lemma 1 and Lemma 2 therefore provides the following relationship
between the nonconvex program (\ref{eq:2}) and the convex program
(\ref{eq:1}).
\begin{thrm}
\label{thm:1} A local minimizer $Y$ of the nonconvex program
(\ref{eq:2}) provides the solution $Y Y^T$ of the convex program
(\ref{eq:1}) if and only if the matrix
\begin{equation} \label{eq:dual_S} S_Y=\nabla_{X} f(Y Y^T) -
\sum_{i=1}^m \lambda_i A_i\end{equation} is positive semidefinite
for the Lagrangian multipliers $\lambda_i$ that satisfy
(\ref{eq:KKT2}).
\end{thrm}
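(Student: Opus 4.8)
The plan is to prove Theorem \ref{thm:1} by showing that the positive semidefiniteness of $S_Y$ is exactly the one remaining KKT condition from Lemma \ref{lmm:1} that is not automatically guaranteed when $Y$ is a local minimizer of (\ref{eq:2}). So I would proceed by carefully matching the two sets of conditions \eqref{eq:KKT1} and \eqref{eq:KKT2} under the substitution $X = YY^T$, $\sigma = \lambda$, $S = S_Y$.

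First I would establish the ``only if'' direction. Suppose $Y$ is a local minimizer of (\ref{eq:2}) and $YY^T$ solves (\ref{eq:1}). By Lemma \ref{lmm:1} there exist $\sigma$ and $S \succeq 0$ satisfying \eqref{eq:KKT1} with $X = YY^T$; in particular $S = \nabla_X f(YY^T) - \sum_i \sigma_i A_i$ and $S\, YY^T = 0$. The Lagrange multipliers of (\ref{eq:2}) satisfying \eqref{eq:KKT2} give $(\nabla_X f(YY^T) - \sum_i \lambda_i A_i)Y = 0$, i.e. $S_Y Y = 0$. The point to check is that $\sigma$ and $\lambda$ can be taken equal, so that $S = S_Y$ and hence $S_Y \succeq 0$. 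From $S\,YY^T = 0$ we get $\|SY\|_F^2 = \mathrm{Tr}(Y^T S^T S Y)$; since $S$ is symmetric positive semidefinite, $S\,YY^T=0$ forces $SY = 0$ (one can see this by noting $\mathrm{Tr}(Y^TSYY^TSY/\ldots)$, or more directly that $S\succeq 0$ and $SYY^T=0$ imply $Y^TSY=0$ hence $SY=0$ since $S = S^{1/2}S^{1/2}$). Therefore both $SY = 0$ and $S_Y Y = 0$, so $(\sum_i(\sigma_i - \lambda_i)A_i)Y = 0$, i.e. $\sum_i(\sigma_i-\lambda_i)A_iY = 0$. Under the linear-independence hypothesis on $\{A_iY\}$ (the genericity condition already invoked in Lemma \ref{lmm:2}) this yields $\sigma = \lambda$, hence $S = S_Y \succeq 0$. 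Some care is needed to phrase the result when $\{A_iY\}$ are dependent: one argues instead that \emph{some} valid choice of $\lambda$ makes $S_Y$ equal the $S$ from Lemma \ref{lmm:1}.

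Next I would establish the ``if'' direction, which is the more substantive half. Assume $Y$ is a local minimizer of (\ref{eq:2}) with multipliers $\lambda$ from \eqref{eq:KKT2}, and assume $S_Y := \nabla_X f(YY^T) - \sum_i \lambda_i A_i \succeq 0$. Set $X := YY^T$, $\sigma := \lambda$, $S := S_Y$. I then verify the five conditions of \eqref{eq:KKT1} one by one. The constraint $\mathrm{Tr}(A_i X) = b_i$ follows from $\mathrm{Tr}(Y^TA_iY) = b_i$ in \eqref{eq:KKT2}. The condition $X \succeq 0$ is immediate since $X = YY^T$. The condition $S \succeq 0$ is the hypothesis. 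The condition $S = \nabla_X f(X) - \sum_i \sigma_i A_i$ is the definition of $S_Y$. The complementary slackness $SX = 0$ follows from $S_Y Y = 0$ (the second line of \eqref{eq:KKT2}) by right-multiplying by $Y^T$: $S_Y YY^T = 0$. So all of \eqref{eq:KKT1} holds, and by Lemma \ref{lmm:1} (sufficiency of KKT for the convex program) $X = YY^T$ solves (\ref{eq:1}).

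The main obstacle is the bookkeeping around the Lagrange multipliers in the ``only if'' direction: one must argue that the multiplier vector $\sigma$ supplied by the convex-program KKT system of Lemma \ref{lmm:1} can be identified with the multiplier vector $\lambda$ from the nonconvex-program KKT system of Lemma \ref{lmm:2}, which is clean under the linear-independence assumption on $\{A_iY\}$ but needs a slightly looser statement otherwise. The other small technical point is the implication $S\succeq 0,\ SYY^T = 0 \Rightarrow SY = 0$, which is where symmetry and positive semidefiniteness of $S$ are genuinely used and which makes the complementary-slackness conditions of the two programs line up. Everything else is a direct term-by-term comparison of \eqref{eq:KKT1} and \eqref{eq:KKT2}.
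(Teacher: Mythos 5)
Your proposal is correct and takes essentially the same route as the paper, whose entire proof is the single line ``check the conditions of Lemma \ref{lmm:1} for the tuple $\{X,S,\sigma\}=\{YY^T,S_Y,\lambda\}$''. Your additional care in the ``only if'' direction---deducing $SY=0$ from $S\,YY^T=0$ and then identifying the convex-program multiplier $\sigma$ with $\lambda$ via the (near-)independence of the $A_iY$---fills in a detail the paper leaves implicit.
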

\begin{proof}
Check the conditions of Lemma \ref{lmm:1} for the tuple
$\{X,S,\sigma\}=\{YY^T,S_Y,\lambda\}$.
\end{proof}\\

It is important to note that, under Assumption 2, the Lagrangian
multipliers in (\ref{eq:KKT2}) have the closed-form expression,
\begin{equation}
\label{eq:lag_mult}\lambda_i=\frac{\mathrm{Tr}(Y^T A_i \nabla_{X}
f(Y Y^T) Y)}{\mathrm{Tr}(Y^T A_i^2 Y)}.
\end{equation}
Hence, the dual matrix $S_Y$ in (\ref{eq:dual_S}) can be
explicitly evaluated at an optimizer $Y$ of (\ref{eq:2}).

\subsection{Second-order optimality conditions}
Let $\mathcal{L}(Y,\lambda)$ denote the Lagrangian of the
nonconvex program (\ref{eq:2}), i.e.,
\[\mathcal{L}(Y,\lambda)=f(Y Y^T)- \sum_{i=1}^m \lambda_i (\mathrm{Tr}(Y^T A_i
Y)-b_i).
\]
In the following, the Lagrangian multipliers $\lambda$ are assumed
to satisfy (\ref{eq:KKT2}). A necessary condition for $Y \in
\mathbb{R}^{n \times p}$ to be optimal is that it is a critical
point, i.e., $\nabla_Y \mathcal{L}(Y,\lambda)~=~0$.
\begin{lmm}
\label{lmm:3}For a minimizer $Y \in \mathbb{R}^{n \times p}$ of
(\ref{eq:2}), one has
\begin{equation}
\label{eq:KKT2_2nd} \mathrm{Tr}(Z^T D_Y
\nabla_Y\mathcal{L}(Y,\lambda)[Z]) \geq 0
\end{equation}
for any matrix $Z \in \mathbb{R}^{n \times p}$ that satisfies,
\begin{equation}
\label{eq:KKT2_2nd2} \mathrm{Tr}(Z^T A_i Y) = 0, \; i=1,\ldots, m.
\end{equation}
\end{lmm}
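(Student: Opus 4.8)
The plan is to invoke the classical second-order necessary condition for equality-constrained minimization, using Assumption~2 to make the argument fully explicit. Write the constraints of (\ref{eq:2}) as $c_i(Y)=\mathrm{Tr}(Y^TA_iY)-b_i=0$. Since $A_i$ is symmetric, $\nabla_Y c_i(Y)=2A_iY$, so (\ref{eq:KKT2_2nd2}) says exactly that $Z$ is orthogonal, in the Frobenius inner product $\langle\cdot,\cdot\rangle$, to every constraint gradient at $Y$; equivalently, $Z$ lies in the kernel of the linearization of the constraints at $Y$. Throughout, $f$ is tacitly twice differentiable, as the statement of the lemma already requires.

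First I would exhibit, for every such $Z$, a $C^2$ feasible curve $t\mapsto Y(t)$ with $Y(0)=Y$ and $\dot Y(0)=Z$. I would look for it in the form $Y(t)=Y+tZ+\sum_{i=1}^m\beta_i(t)A_iY$ with $\beta_i(0)=0$. The key point is that, because $A_iA_j=0$ for $i\neq j$ (Assumption~2), all cross terms cancel when $Y(t)$ is substituted into $c_i$, and the $m$ feasibility equations \emph{decouple}: the $i$-th one reduces to the scalar equation
\[
2\beta_i(t)\,\mathrm{Tr}(Y^TA_i^2Y)+t^2\,\mathrm{Tr}(Z^TA_iZ)+2t\beta_i(t)\,\mathrm{Tr}(Z^TA_i^2Y)+\beta_i(t)^2\,\mathrm{Tr}(Y^TA_i^3Y)=0,
\]
where the term linear in $t$ has vanished thanks to (\ref{eq:KKT2_2nd2}). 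Provided $A_iY\neq0$, the coefficient $\mathrm{Tr}(Y^TA_i^2Y)=\mathrm{Tr}((A_iY)^T(A_iY))$ is strictly positive, so the implicit function theorem yields a $C^2$ solution $\beta_i(t)$ near $t=0$ with $\beta_i(0)=0$; differentiating the equation at $t=0$ then forces $\beta_i'(0)=0$. Hence $Y(t)$ is feasible for small $t$, $Y(0)=Y$, and $\dot Y(0)=Z+\sum_i\beta_i'(0)A_iY=Z$.

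Next I would set $g(t)=\mathcal{L}(Y(t),\lambda)$. Along the feasible curve the constraint terms of the Lagrangian vanish identically, so $g(t)=f(Y(t)Y(t)^T)=\tilde f(Y(t))$; since $Y$ minimizes $\tilde f$ over the feasible set and $Y(t)$ stays feasible, $g$ has a local minimum at $t=0$, whence $g''(0)\geq0$. A second differentiation of $g(t)=\mathcal{L}(Y(t),\lambda)$ at $t=0$ (with $\lambda$ fixed) yields
\[
g''(0)=\mathrm{Tr}\!\left(Z^TD_Y\nabla_Y\mathcal{L}(Y,\lambda)[Z]\right)+\left\langle\nabla_Y\mathcal{L}(Y,\lambda),\ddot Y(0)\right\rangle ,
\]
and the last term is zero because $Y$ is a critical point of the Lagrangian, $\nabla_Y\mathcal{L}(Y,\lambda)=0$ (the second line of (\ref{eq:KKT2}), with $\lambda$ given by (\ref{eq:lag_mult})). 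Comparing the two expressions for $g''(0)$ gives (\ref{eq:KKT2_2nd}).

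The step I expect to be the real obstacle is the rigorous construction of the feasible curve — in particular, verifying the hypotheses of the implicit function theorem and ensuring that $Y(t)$ is \emph{exactly} feasible rather than merely feasible to second order; the decoupling produced by Assumption~2 is precisely what reduces this to $m$ independent scalar problems. Without that structure one would instead need a constraint qualification, e.g.\ linear independence of $\{A_iY\}$ as in Lemma~\ref{lmm:2}, together with a vector-valued implicit-function argument, and one should in any case exclude or treat separately the degenerate case $A_iY=0$, which does not occur, for instance, on the spectahedron or the elliptope.
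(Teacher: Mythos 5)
Your proof is correct, but it takes a genuinely different (and much more explicit) route than the paper, whose entire proof of Lemma~\ref{lmm:3} is the one-line citation ``these are the second order KKT-conditions of the program (\ref{eq:2})''. Where the paper appeals to the standard second-order necessary conditions for equality-constrained minimization --- which, as stated in textbooks, require a constraint qualification such as linear independence of the constraint gradients $\{A_iY\}$ --- you re-derive them from scratch by exhibiting an exact feasible curve $Y(t)=Y+tZ+\sum_i\beta_i(t)A_iY$ through $Y$ with velocity $Z$, and the computation is sound: Assumption~2 kills every cross term $\mathrm{Tr}(\cdot\,A_jA_i\,\cdot)$ with $i\neq j$, the feasibility system decouples into $m$ scalar quadratics in $\beta_i$, the implicit function theorem applies because $\partial/\partial\beta_i$ of the $i$-th equation at $(0,0)$ equals $2\,\mathrm{Tr}(Y^TA_i^2Y)=2\|A_iY\|_F^2>0$, and $\beta_i'(0)=0$ gives $\dot Y(0)=Z$; the chain-rule step and the vanishing of $\langle\nabla_Y\mathcal{L},\ddot Y(0)\rangle$ at the critical point are also correct. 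What your approach buys is that it makes visible the regularity hypothesis the paper leaves tacit: you need $A_iY\neq0$ for each $i$, which is exactly the same nondegeneracy already required for the multiplier formula (\ref{eq:lag_mult}) to be well defined, and which holds on the spectahedron and the elliptope. What it costs is length, and a residual gap in the degenerate case $A_iY=0$ (where your curve construction fails because the $i$-th equation collapses to $t^2\,\mathrm{Tr}(Z^TA_iZ)=0$); you flag this honestly, and the paper's citation-style proof silently carries the analogous constraint-qualification caveat, so neither version handles it.
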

\begin{proof}
These are the second order KKT-conditions of the program
(\ref{eq:2}).
\end{proof}\\

\begin{lmm}
\label{lmm:4} Because of the convexity of $f(X)$, one always has
\begin{equation}\label{eq:alpha}\frac{1}{2}\mathrm{Tr}(Z^T D_Y \nabla_Y\mathcal{L}(Y,\lambda)[Z])=\mathrm{Tr}(Z^T S_Y
Z) + \alpha\end{equation} with $\alpha \geq 0$ and for any matrix
$Z$ that satisfies (\ref{eq:KKT2_2nd2}).
 The term
$\alpha$ cancels out once $Y Z^T=0$.
\end{lmm}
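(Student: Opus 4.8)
The plan is to obtain the identity (\ref{eq:alpha}) by a straightforward two-fold application of the chain rule, isolating the Hessian-of-$f$ contribution as the extra term $\alpha$, and then to invoke the convexity of $f$ for the sign.

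First I would compute $\nabla_Y\mathcal{L}(Y,\lambda)$. Writing $\tilde f(Y)=f(YY^T)$ and using the symmetry of $\nabla_X f(YY^T)$ and of each $A_i$, the chain rule gives $D_Y\tilde f(Y)[Z]=2\langle\nabla_X f(YY^T)Y,Z\rangle$ and $D_Y\mathrm{Tr}(Y^TA_iY)[Z]=2\langle A_iY,Z\rangle$, whence
\[
\nabla_Y\mathcal{L}(Y,\lambda)=2\Big(\nabla_X f(YY^T)-\sum_{i=1}^m\lambda_iA_i\Big)Y=2S_YY .
\]
Here $\lambda$ is the fixed multiplier vector satisfying (\ref{eq:KKT2}); this step uses only the definition (\ref{eq:dual_S}) of $S_Y$ and imposes no condition on $Z$.

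Next I would differentiate once more along $Z$, keeping $\lambda$ frozen. Only the factors $S_Y$ and $Y$ depend on $Y$, and $D_Y S_Y[Z]=\nabla^2_X f(YY^T)[ZY^T+YZ^T]$, where $\nabla^2_X f$ denotes the Euclidean Hessian of $f$, a self-adjoint linear operator on ${\mathbb{S}}^n$. Hence
\[
\tfrac12\,D_Y\nabla_Y\mathcal{L}(Y,\lambda)[Z]=S_YZ+\big(\nabla^2_X f(YY^T)[ZY^T+YZ^T]\big)Y .
\]
Pairing with $Z^T$ under the trace, the first summand contributes $\mathrm{Tr}(Z^TS_YZ)$. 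For the second, set $M:=\nabla^2_X f(YY^T)[ZY^T+YZ^T]\in{\mathbb{S}}^n$; cyclicity of the trace turns $\mathrm{Tr}(Z^TMY)$ into $\langle M,YZ^T\rangle$, and since $M$ is symmetric, $\langle M,YZ^T\rangle=\langle M,\mathrm{sym}(YZ^T)\rangle=\tfrac12\langle M,ZY^T+YZ^T\rangle$. This is exactly
\[
\alpha:=\tfrac12\big\langle\nabla^2_X f(YY^T)[ZY^T+YZ^T],\,ZY^T+YZ^T\big\rangle ,
\]
so (\ref{eq:alpha}) holds.

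It remains to settle the two assertions on $\alpha$. Convexity of $f$ says precisely that $\nabla^2_X f(YY^T)\succeq 0$ as a quadratic form on ${\mathbb{S}}^n$, so $\alpha\ge 0$; and if $YZ^T=0$ then also $ZY^T=(YZ^T)^T=0$, the argument $ZY^T+YZ^T$ vanishes, and $\alpha=0$. The only point that needs a little care is the symmetrization $\langle M,YZ^T\rangle=\langle M,\mathrm{sym}(YZ^T)\rangle$, legitimate because the range of $\nabla^2_X f$ lies in ${\mathbb{S}}^n$ and is therefore orthogonal to the skew-symmetric part of $YZ^T$; combined with the bookkeeping of holding $\lambda$ fixed during the second differentiation, the rest is routine. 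I would also remark that, as the computation shows, the identity in fact holds for every $Z\in\mathbb{R}^{n\times p}$, the restriction (\ref{eq:KKT2_2nd2}) being carried over only from the setting of Lemma \ref{lmm:3}.
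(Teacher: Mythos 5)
Your proof is correct and follows essentially the same route as the paper: both identify $\alpha$ as the quadratic form $\tfrac12\langle \mathrm{D}_X(\nabla_X f)[W],W\rangle$ with $W=ZY^T+YZ^T$ and invoke convexity of $f$ for its sign. The only difference is bookkeeping: the paper differentiates with $\lambda$ depending on $Y$ through (\ref{eq:lag_mult}), producing an extra term $-\sum_i D_Y\lambda_i[Z]\,\mathrm{Tr}(Z^TA_iY)$ that is killed by (\ref{eq:KKT2_2nd2}), whereas you freeze $\lambda$ so that term never arises and, as you correctly observe, the identity then holds for all $Z$.
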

\begin{proof}
By noting that $\nabla_Y \mathcal{L}(Y,\lambda) = 2 S_Y Y$, one
has
\begin{multline}
\label{eq:lemma4} \frac{1}{2}\mathrm{Tr}(Z^T D_Y
\nabla_Y\mathcal{L}(Y,\lambda)[Z])=\\ \mathrm{Tr}(Z^T S_Y Z) +
\mathrm{Tr}(Z^T D_Y (\nabla_{X} f(Y Y^T)) [Z] Y) - \sum_{i=1}^m
D_Y \lambda_i [Z] \mathrm{Tr} (Z^T A_i Y).
\end{multline}
The last term of (\ref{eq:lemma4}) cancels out by virtue of
(\ref{eq:KKT2_2nd2}) and the convexity of the function $f(X)$
ensures the second term of (\ref{eq:lemma4}) to be nonnegative,
i.e.,
\begin{align*}
\mathrm{Tr}(Z^T D_Y(\nabla_{X} f(Y Y^T)) [Z] Y) & = \frac{1}{2}
\mathrm{Tr}((Y Z^T + Z Y^T) D_Y(\nabla_{X} f(Y Y^T)) [Z])\\
 & = \frac{1}{2} \mathrm{Tr}(W^T D_X(\nabla_{X} f) [W])\\
 & \geq 0,
\end{align*}
where $X=Y Y^T$ and $W= Y Z^T + Z Y^T \in S_n$.
\end{proof}\\

\begin{thrm}
\label{thm:2} A local minimizer $Y$ of the program (\ref{eq:2})
provides the solution $X=Y Y^T$ of the program (\ref{eq:1}) if it
is rank deficient.
\end{thrm}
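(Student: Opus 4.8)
The plan is to show that if $Y$ is a rank-deficient local minimizer of (\ref{eq:2}), then the dual matrix $S_Y$ defined in (\ref{eq:dual_S}) is positive semidefinite; by Theorem \ref{thm:1} this is exactly what is needed to conclude that $YY^T$ solves (\ref{eq:1}). So the whole argument reduces to establishing $S_Y \succeq 0$, and the tool for that is the second-order information encoded in Lemmas \ref{lmm:3} and \ref{lmm:4}.

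First I would exploit rank deficiency: since $Y \in \mathbb{R}^{n\times p}$ has $\mathrm{rank}(Y) < p$, there is a nonzero vector $v \in \mathbb{R}^p$ with $Yv = 0$. The idea is to feed test directions of the form $Z = u v^T$ into the second-order conditions, where $u \in \mathbb{R}^n$ is arbitrary. Two things happen with this choice. On one hand, $\mathrm{Tr}(Z^T A_i Y) = \mathrm{Tr}(v u^T A_i Y) = u^T A_i Y v = 0$ for every $i$, so $Z$ is feasible for Lemma \ref{lmm:3}, i.e., (\ref{eq:KKT2_2nd2}) holds. On the other hand, $Y Z^T = Y v u^T = 0$, so by the last sentence of Lemma \ref{lmm:4} the nonnegative term $\alpha$ vanishes, and (\ref{eq:alpha}) collapses to
\[
\tfrac{1}{2}\mathrm{Tr}(Z^T D_Y \nabla_Y \mathcal{L}(Y,\lambda)[Z]) = \mathrm{Tr}(Z^T S_Y Z).
\]
Combining this with the inequality (\ref{eq:KKT2_2nd}) from Lemma \ref{lmm:3} gives $\mathrm{Tr}(Z^T S_Y Z) \geq 0$. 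Finally $\mathrm{Tr}(Z^T S_Y Z) = \mathrm{Tr}(v u^T S_Y u v^T) = (v^Tv)\, u^T S_Y u$, and since $v \neq 0$ we get $u^T S_Y u \geq 0$ for all $u \in \mathbb{R}^n$, i.e., $S_Y \succeq 0$. Applying Theorem \ref{thm:1} then finishes the proof.

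The step I expect to require the most care is the justification that $Z = uv^T$ simultaneously satisfies the constraint (\ref{eq:KKT2_2nd2}) and makes $\alpha$ vanish — it is precisely the interplay $Yv = 0 \Rightarrow YZ^T = 0$ that makes the rank-deficiency hypothesis bite, and one should state clearly that such a $v$ exists exactly because $Y$ does not have full column rank. Everything else is bookkeeping with the trace identities already set up in Lemmas \ref{lmm:3} and \ref{lmm:4}; no new estimates are needed. One small point worth a remark: the argument only uses that $Y$ fails to have full column rank, which is the natural reading of ``rank deficient'' here, and it is consistent with the intuition that increasing $p$ past the true rank $r$ cannot trap the algorithm at a spurious minimizer.
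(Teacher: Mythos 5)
Your proposal is correct and takes essentially the same route as the paper: both arguments exploit the right null space of $Y$ (the paper writes $Y=\tilde{Y}M^T$ and uses $Z=\tilde{Z}M_\perp^T$ with $YM_\perp=0$; you use the rank-one direction $Z=uv^T$ with $Yv=0$) to build test directions that satisfy (\ref{eq:KKT2_2nd2}) and have $YZ^T=0$, so that Lemmas \ref{lmm:3} and \ref{lmm:4} give $\mathrm{Tr}(Z^TS_YZ)\geq 0$ and hence $S_Y\succeq 0$, after which Theorem \ref{thm:1} concludes. Your explicit reduction $\mathrm{Tr}(Z^TS_YZ)=(v^Tv)\,u^TS_Yu$ is a clean way to finish a step the paper leaves implicit.
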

\begin{proof}
For the matrix $Y \in \mathbb{R}^{n \times p}$ to span a
$r$-dimensional subspace, the following factorization has to hold,
\begin{equation}
\label{eq:proof4} Y=\tilde{Y} M^T,
\end{equation}
with $\tilde{Y} \in \mathbb{R}^{n \times r}$ and $M$ a full rank
matrix of $\mathbb{R}^{p \times r}$. Let $M_{\perp} \in
\mathbb{R}^{p \times (p-r)}$ be an orthogonal basis for the
orthogonal complement of the column space of M, i.e., $M^T
M_{\perp} =0$ and $M_{\perp}^T M_{\perp}=I$. For any matrix
$\tilde{Z} \in \mathbb{R}^{n \times (p-r)}$, the matrix $Z=
\tilde{Z} M_{\perp}^T$ satisfies \[ Y Z^T = 0\] such that the
conditions (\ref{eq:KKT2_2nd2}) hold and $\alpha$ cancels out in
(\ref{eq:alpha}). Thus, by virtue of Lemmas \ref{lmm:3} and
\ref{lmm:4},  \[\mathrm{Tr}(Z^T S_Y Z) \geq 0,\] for matrices
$Z=\tilde{Z} M_{\perp}^T$, i.e., the matrix $S_Y$ is positive
semidefinite and $X=Y Y^T$ is a solution of the problem
(\ref{eq:1}).
\end{proof}\\

\begin{crllr}
\label{crllr:1} In the case $p=n$, any local minimizer $Y \in
\mathbb{R}^{n \times n}$ of the program (\ref{eq:2}) provides the
solution $X=Y Y^T$ of the program (\ref{eq:1}).
\end{crllr}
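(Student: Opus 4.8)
The plan is to derive Corollary \ref{crllr:1} directly from Theorem \ref{thm:2} by showing that in the case $p=n$ every matrix $Y \in \mathbb{R}^{n \times n}$ that is a local minimizer of (\ref{eq:2}) is necessarily rank deficient. Recall that Assumption 1 guarantees the original convex program (\ref{eq:1}) has a solution $X^*$ of rank $r \ll n$, so in particular $r < n$. The idea is that a full-rank $Y$ would force $X = YY^T$ to have rank $n$, which cannot be the optimal $X^*$; but we must be careful, because the point of Theorem \ref{thm:2} is precisely that a local minimizer of the \emph{nonconvex} program need not be globally optimal. So the argument cannot simply invoke optimality of $YY^T$ for (\ref{eq:1}).

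First I would observe that when $p = n$, the map $Y \mapsto YY^T$ is surjective onto the cone of symmetric positive semidefinite matrices: every $X \succeq 0$ can be written as $YY^T$ with $Y \in \mathbb{R}^{n \times n}$ (e.g.\ via the symmetric square root or a Cholesky-type factor). Consequently, the feasible set of (\ref{eq:2}) maps \emph{onto} the entire feasible set of (\ref{eq:1}), and the infimum of $f(YY^T)$ over feasible $Y$ equals the optimal value of the convex program. Then I would argue by contradiction: suppose $Y$ is a local minimizer of (\ref{eq:2}) with $\rank(Y) = n$, i.e.\ $Y$ invertible. I would show that near such a $Y$ one can reach strictly lower values of $f(YY^T)$ while staying feasible, unless $YY^T$ is already globally optimal for (\ref{eq:1}). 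Since $YY^T$ has rank $n$ and the convex solution $X^*$ has rank $r < n$, if $YY^T$ is \emph{not} the solution $X^*$ we get a descent direction by moving $X=YY^T$ toward $X^*$ along the feasible (affine) set intersected with the PSD cone --- and because $Y$ is invertible this motion pulls back to a smooth path in $Y$-space, contradicting local minimality. If, on the other hand, $YY^T = X^*$ already, then $Y$ does provide the solution of (\ref{eq:1}) and we are done.

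A cleaner route, which I would actually prefer to write down, bypasses the case analysis: when $Y$ is invertible, the constraint surface $\{\mathrm{Tr}(Y^TA_iY)=b_i\}$ together with the objective $f(YY^T)$ is, locally around $Y$, diffeomorphic (via $Y \mapsto YY^T$, which is a local diffeomorphism onto $\mathbb{S}^n$ near an invertible point restricted appropriately, or more precisely a submersion onto the PSD cone's interior) to the convex problem (\ref{eq:1}) restricted to a neighborhood of $X^* = YY^T$ in the feasible set. Hence a local minimizer of (\ref{eq:2}) at an invertible $Y$ corresponds to a local --- hence, by convexity, global --- minimizer of (\ref{eq:1}) at $YY^T$. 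Either way the conclusion is the same: for $p=n$ the rank-deficiency hypothesis of Theorem \ref{thm:2} is not needed, because a full-rank $Y$ is already optimal, and a rank-deficient $Y$ is covered by Theorem \ref{thm:2} directly.

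The main obstacle, and the step requiring the most care, is justifying that local minimality in $Y$-space transfers to local minimality in $X$-space when $Y$ is invertible --- one needs the pushforward of a neighborhood of $Y$ under $Y \mapsto YY^T$ to cover a genuine relative neighborhood of $YY^T$ inside the feasible set of (\ref{eq:1}), respecting the affine constraints. This is where invertibility of $Y$ is essential: it makes $Y \mapsto YY^T$ a submersion onto the open cone $\{X \succ 0\}$, so the affine constraint slice is mapped onto the corresponding constraint slice with full rank, and no feasible descent direction for $X$ is lost. Once that is established, convexity of $f$ closes the argument immediately.
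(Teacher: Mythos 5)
Your proof is correct, but it handles the full-rank case by a genuinely different mechanism than the paper. The paper's own argument is a one-liner built on the first-order machinery already in place: if $Y$ is rank deficient, Theorem \ref{thm:2} applies; otherwise $Y$ is invertible, and the stationarity condition in (\ref{eq:KKT2}), which reads $S_Y Y = 0$, forces $S_Y = 0$. The zero matrix is positive semidefinite, so Theorem \ref{thm:1} concludes. This is purely algebraic and produces the dual certificate $S_Y=0$ explicitly. You instead argue that at an invertible $Y$ the map $Y \mapsto YY^T$ is a submersion onto the interior of the positive semidefinite cone (its differential $Z \mapsto YZ^T + ZY^T$ is onto $\mathbb{S}^n$, as one sees by taking $Z = \tfrac12 W Y^{-T}$), hence open, so local minimality of $Y$ for (\ref{eq:2}) pushes forward to local minimality of $YY^T$ for (\ref{eq:1}), which by convexity is global minimality; equivalently, your contradiction version pulls the segment from $YY^T$ toward a better feasible point back to a feasible path $Y_t = X_t^{1/2}X^{-1/2}Y$ in $Y$-space. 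This is sound — the key point you correctly isolate is that the image of a feasible neighborhood of $Y$ covers a relative feasible neighborhood of $X=YY^T$, which holds because feasible $X'$ near $X \succ 0$ remain positive definite and the trace constraints pull back exactly. Your approach buys a conceptual explanation (no descent direction is lost at an interior point) and bypasses Theorems \ref{thm:1} and \ref{thm:2} entirely in the full-rank case, at the cost of the openness argument; the paper's route is shorter and stays within the KKT framework it has already built. One small slip: the map $Y \mapsto YY^T$ is never a local diffeomorphism (the dimensions $n^2$ and $n(n+1)/2$ do not match, and the $\mathcal{O}_n$-invariance kills injectivity); your self-correction to ``submersion'' is the right statement. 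Also, the opening appeal to Assumption 1 and $r<n$ is not actually used by your final argument and can be dropped.
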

\begin{proof}
If $Y$ is rank deficient, the matrix $X=Y Y^T$ is optimal for
(\ref{eq:1}) by virtue of Theorem \ref{thm:2}. Otherwise, the
matrix $S_Y$ is zero because of the second condition in
(\ref{eq:KKT2}) and $X$ is optimal for (\ref{eq:1}).
\end{proof}\\

\subsection{An algorithm to solve the convex problem}
The proposed algorithm consists in solving a sequence of nonconvex
problems (\ref{eq:2}) of increasing dimension until the resulting
local minimizer $Y$ represents a solution of the convex program
(\ref{eq:1}). Both Theorems \ref{thm:1} and \ref{thm:2} provide
conditions to check this fact. When the program (\ref{eq:2}) is
solved in a dimension $p$ smaller than the unknown rank $r$, none
of these conditions can be fulfilled. The dimension $p$ is thus
incremented after each resolution of (\ref{eq:2}). In order to
ensure a monotone decrease of the cost function through the
iterations, the optimization algorithm that solves (\ref{eq:2}) is
initialized with a matrix corresponding to $Y$ with an additional
zero column appended, i.e., $Y_0=[Y | 0]$. Since this
initialization occurs when the local minimizer $Y \in
\mathbb{R}^{n \times p}$ of (\ref{eq:2}) does not represent the
solution of (\ref{eq:1}), $Y_0$ is a saddle point of the nonconvex
problem for the dimension $p+1$. This can be a critical issue for
many optimization algorithms. Fortunately, in the present case, a
descent direction from $Y_0$ can be explicitly evaluated. For
Lemma \ref{lmm:4}, the matrix $Z= [0 |v]$, for instance, where $0$
is a zero matrix of the size of $Y$ and $v$ is the eigenvector of
$S_Y$ related to the smallest algebraic eigenvalue verifies,
\[\frac{1}{2} \mathrm{Tr}(Z^T D_Y\nabla_{Y}\mathcal{L}(Y_0,\lambda)[Z])= v^T S_Y v \leq 0,\]
since $Y_0 Z^T=0$ for the Lagrangian multipliers $\lambda$ given
in (\ref{eq:lag_mult}). All these elements lead to the
meta-algorithm displayed in Algorithm \ref{algo1}. The parameter
$\varepsilon$ fixes a threshold on the eigenvalues of $S_Y$ to
decide about the
nonnegativity of this matrix. $\varepsilon$ is chosen to $10^{-12}$ in our implementation.\\

\begin{algorithm}[h]
\dontprintsemicolon \SetKwInOut{Input}{input}
\SetKwInOut{Output}{output}
 \Input{Initial rank $p_0$, initial iterate $Y^{(0)} \in \mathbb{R}^{n \times p_0}$ and parameter $\varepsilon$.}
 \Output{The solution $X$ of the convex program (\ref{eq:1}).}
 \Begin{$p \longleftarrow p_0$\\
 $Y_p \longleftarrow Y^{(0)}$\\
 $\mathrm{stop} \longleftarrow 0$\\
 \While{$\mathrm{stop} \neq 1$}{Initialize an optimization scheme with $Y_p$ to find a local minimum $Y_p^*$ of (\ref{eq:2}) by exploiting
 a descent direction $Z_p$ if available.\\
 \eIf{$p = p_0 \; \mathbf{and} \; \mathrm{rank}(Y_p^*)<p$}{$\mathrm{stop}=1$}{Find the smallest eigenvalue $\lambda_{\min}$ and the related eigenvector $V_{\min}$ of the matrix $S_Y$ (\ref{eq:dual_S}).\\
 \eIf{$\lambda_{\min}\geq - \varepsilon$}{$\mathrm{stop}=1$}{$p \longleftarrow p+1$ \\ $Y_p \longleftarrow [Y_p^* | 0]$\\
A descent direction from the saddle point $Y_p$ is given by $Z_p=
[0| V_{\min}].$}} } $X \longleftarrow Y_p^* Y_p^{*T}$}
 \label{algo1} \caption{Meta-algorithm for solving the convex program (\ref{eq:1})  \protect\refstepcounter{footnote}\protect\footnotemark[\thefootnote] }
\end{algorithm}
\footnotetext[\thefootnote]{A Matlab implementation of Algorithm
\ref{algo1} with the manifold-based optimization method of Section
\ref{sec:manifold_optim} can be downloaded from
http://www.montefiore.ulg.ac.be/$\sim$journee.}

It should be mentioned that, to check the optimality for the
convex program (\ref{eq:1}) of a local minimizer $Y_{p}^*$, the
rank condition of Theorem \ref{thm:2} is computationally cheaper
to evaluate than the nonnegativity condition of Theorem
\ref{thm:1}. Nevertheless, the rank condition does not provide a
descent direction to escape saddle points. It furthermore requires
to solve the program (\ref{eq:2}) at a dimension that is strictly
greater than $r$, the rank of the solution of (\ref{eq:1}). Hence,
this condition is only used at the initial rank $p_0$ and holds if
$p_0$ is chosen larger than the unknown $r$. Numerically, the rank
of $Y_{p_0}^*$ is computed as the number of singular values that
are greater than a threshold fixed at $10^{-6}$. The algorithm
proposed in \cite{Burer03} exploits exclusively the rank condition
of Theorem \ref{thm:2}. For this reason, each optimization of
(\ref{eq:2}) has to be randomly initialized and the algorithm in
\cite{Burer03} is not a descent algorithm.\\

By virtue of Corollary \ref{crllr:1}, Algorithm \ref{algo1} stops
at the latest once $p=n$. The applications proposed in Sections
\ref{sec:max_cut} and \ref{sec:SPCA} indicate that in practice,
however, the algorithm stops at a rank $p$ that is much lower than
the dimension $n$. If $p_0 < r$, then the algorithm stops once $p$
equals the rank $r$ of the solution of (\ref{eq:1}). These
applications also illustrate that the magnitude of smallest
eigenvalue $\lambda_{\min}$ of the matrix $S_Y$ can be used to
monitor the convergence. The value $|\lambda_{\min}|$ indicates
whether the current iterate is close to satisfy the KKT conditions
(\ref{eq:KKT1}). This feature is of great interest once an
approximate solution to (\ref{eq:1}) is sufficient. The threshold
$\varepsilon$ set on $\lambda_{\min}$ controls then the accuracy
of the result.\\

A trust-region scheme based on second-order derivative information
is proposed in the next section for computing a local minimum of
(\ref{eq:2}). This method is provided with a convergence theory
that ensures the iterates to converge towards a local
minimizer.\\

Hence, the proposed algorithm presents the following notable
features. First, it converges toward the solution of the convex
program (\ref{eq:1}) by ensuring a monotone decrease of the cost
function. Then, the magnitude of the smallest eigenvalue of $S_Y$
provides a mean to monitor the convergence. Finally, the inner
problem (\ref{eq:2}) is solved by second-order methods featuring
superlinear local convergence.

\section{Manifold-based optimization}
\label{sec:manifold_optim} We now derive an optimization scheme
that solves the nonconvex and nonlinear program,
\begin{equation}
\label{eq:22}
\begin{array}{ll}
\underset{Y \in \mathbb{R}^{n \times p}}{\min} & \tilde{f}(Y)\\
\text{s.t.} & \mathrm{Tr}(Y^T A_i Y)=b_i, \; A_i \in {\mathbb{S}}^{n}, b_i \in \mathbb{R},\; i=1,\ldots,m,\\
\end{array}
\end{equation}
where $\tilde{f}(Y)=f(Y Y^T)$ for some $f: \mathbb{S}^n \rightarrow \mathbb{R}$.\\

As previously mentioned, Program (\ref{eq:22}) is invariant by
right-multiplication of the variable $Y$ by orthogonal matrices.
The critical points of (\ref{eq:22}) are thus non isolated. The
proposed algorithm exploits this symmetry by optimizing the cost
$\tilde{f}(\cdot)$ on the quotient
\[\mathcal{M} = \bar{\mathcal{M}} / \mathcal{O}_p,\] where
$\mathcal{O}_p=\{Q\in \mathbb{R}^{p \times p} | Q^T Q = I \}$ is
the orthogonal group and $\bar{\mathcal{M}}=\{Y \in
\mathbb{R}_{*}^{n \times p} : \mathrm{Tr}(Y^T A_i Y)=b_i, \;
i=1,\ldots,m\}$ is the feasible set.\footnote{$\mathbb{R}_*^{n
\times p}$ is the noncompact Stiefel manifold of full-rank $n
\times p$ matrices. The nondegeneracy condition is required to
deal with differentiable manifolds.} Each point of the quotient
$\mathcal{M}$ is an equivalence class
\begin{equation} \label{eq:eq_class}[Y]=\{ Y Q |
Q \in \mathcal{O}_p\}.\end{equation} It can be proven that the
quotient $\mathcal{M}$ presents a manifold structure
\cite{AbsMahSep2008}. Program (\ref{eq:22}) is thus strictly
equivalent to the optimization problem,
\[ \underset{[Y]\in \mathcal{M}}{\min} \bar f([Y]),\]
for the function $\bar{f}: \mathcal{M} \rightarrow \mathbb{R} :
[Y] \mapsto \bar{f}([Y])= \tilde{f}(Y).$\\

Several unconstrained optimization methods have been generalized
to search spaces that are differentiable manifolds. This is, e.g.,
the case of
 the trust-region approach. Details on this algorithm can be found in
\cite{Absil07,AbsMahSep2008}. It is important to mention that this
algorithm is provided with a convergence theory whose results are
similar to the ones related to classical unconstrained
optimization. In particular, trust-region methods on manifolds
converge globally to stationary points of the cost function if the
inner iteration produces a model decrease that is better than a
fixed fraction of the Cauchy decrease; such a property is
achieved, e.g., by the Steihaug-Toint inner iteration. Since the
iteration is moreover a descent method, convergence to saddle
points or local maximizers is not observed in practice. It is
possible to obtain guaranteed convergence to a point where the
second-order necessary conditions of optimality hold, by using
inner iterations that exploit the model more fully (e.g., the
inner iteration of Mor\'e and Sorensen), but these inner
iterations tend to be prohibitively expensive for large-scale
problems. For appropriate choices of the inner iteration stopping
criterion, trust-region methods converge locally superlinearly
towards the nondegenerate local minimizers of the cost function.
The parameter $\theta$ in Equation (10) of \cite{Absil07} has been
set to one, which guarantees a quadratic
convergence.\\

A few important objects have to be specified to exploit the
trust-region algorithm of \cite{Absil07} in the present context.
First, the tangent space at a point $Y$ of the manifold
$\bar{\mathcal{M}}$,
\[ T_Y \bar{\mathcal{M}}=\{Z \in \mathbb{R}^{n \times p} : \mathrm{Tr}(Y^T A_i
Z)=0, \; i=1, \ldots, m \},\] has to be decomposed in two
orthogonal subspaces, the \emph{vertical space}
$\mathcal{V}_Y\mathcal{M}$ and the \emph{horizontal space}
$\mathcal{H}_Y\mathcal{M}$. The vertical space
$\mathcal{V}_Y\mathcal{M}$ corresponds to the tangent space to the
equivalence classes,
\[
\mathcal{V}_Y\mathcal{M}= \{Y \Omega : \Omega \in R^{p \times p},
\; \Omega^T=-\Omega\}.
\]
The horizontal space $\mathcal{H}_Y\mathcal{M}$ is the orthogonal
complement of $\mathcal{V}_Y\mathcal{M}$ in $T_Y
\bar{\mathcal{M}}$, i.e., \begin{equation} \label{eq:h_space}
\mathcal{H}_Y\mathcal{M}=\{Z \in T_Y \bar{\mathcal{M}}: Z^T Y= Y^T
Z\},
\end{equation}
for the Euclidean metric $ \langle Z_1,Z_2
\rangle=\mathrm{Tr}(Z_1^T Z_2)$ for all $Z_1, Z_2 \in T_Y
\bar{\mathcal{M}}$. Expression (\ref{eq:h_space}) results from the
equality $\mathrm{Tr}(S \Omega) =0$ that holds for any symmetric
matrix $S$ and skew-symmetric matrix $\Omega$ of
compatible dimension.\\

Let $N_Y\bar{\mathcal{M}}$, the \emph{normal space} to
$\bar{\mathcal{M}}$ at $Y$, denote the orthogonal complement of
$T_Y\bar{\mathcal{M}}$ in $\mathbb{R}^{n\times p}$, i.e.,
$N_Y\bar{\mathcal{M}} = \{\sum_{i=1}^m \alpha_i A_i Y, \; \alpha
\in \mathbb{R}^m\}.$ Hence, the Euclidean space $\mathbb{R}^{n
\times p}$ can be divided into three mutually orthogonal
subspaces,
\[\mathbb{R}^{n \times p} = \mathcal{H}_Y\mathcal{M} \oplus \mathcal{V}_Y\mathcal{M} \oplus N_Y\bar{\mathcal{M}}.\]
The trust-region algorithm proposed in \cite{Absil07} requires a
projection $P_Y(\cdot)$ from $\mathbb{R}^{n \times p}$ to
$\mathcal{H}_Y \mathcal{M}$ along $\mathcal{V}_Y\mathcal{M} \oplus
N_Y\bar{\mathcal{M}}$. The following theorem provides a
closed-form expression.
\begin{thrm}
Let $Y$ be a point on $\bar{\mathcal{M}}$. For a matrix $Z \in
\mathbb{R}^{n \times p}$, the projection $P_Y(\cdot):\mathbb{R}^{n
\times p} \rightarrow \mathcal{H}_Y\mathcal{M}$ is given by
\[P_Y(Z)=Z-Y \Omega-\sum_{i=1}^m \alpha_i A_i Y,\] where $\Omega$ is the skew symmetric matrix that solves
the Sylvester equation
\begin{equation*}
\Omega Y^T Y+Y^T Y \Omega = Y^T Z-Z^T Y,
\end{equation*}
and with the coefficients
\begin{equation*} \alpha_i=\frac{\mathrm{Tr}(Z^T A_i Y)}{\mathrm{Tr}(Y^T
A_i^2 Y)}.\end{equation*}
\end{thrm}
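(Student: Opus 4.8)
The plan is to verify the formula directly. Since the text has established that $\mathbb{R}^{n\times p}=\mathcal{H}_Y\mathcal{M}\oplus\mathcal{V}_Y\mathcal{M}\oplus N_Y\bar{\mathcal{M}}$ is an \emph{orthogonal} direct sum, every $Z$ decomposes uniquely as $Z=Z_{\mathcal H}+Z_{\mathcal V}+Z_{N}$, and $P_Y(Z)$ is by definition $Z_{\mathcal H}$. So it suffices to exhibit a candidate $P_Y(Z)$ lying in $\mathcal{H}_Y\mathcal{M}$ such that $Z-P_Y(Z)$ lies in $\mathcal{V}_Y\mathcal{M}\oplus N_Y\bar{\mathcal{M}}$; uniqueness of the decomposition then forces it to be the projection.

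First I would take the ansatz $P_Y(Z)=Z-Y\Omega-\sum_{i=1}^m\alpha_iA_iY$ with $\Omega$ skew-symmetric and $\alpha\in\mathbb{R}^m$ unknown. Then $Y\Omega\in\mathcal{V}_Y\mathcal{M}$ and $\sum_i\alpha_iA_iY\in N_Y\bar{\mathcal{M}}$ hold automatically by the descriptions of these two subspaces, so $Z-P_Y(Z)\in\mathcal{V}_Y\mathcal{M}\oplus N_Y\bar{\mathcal{M}}$ for \emph{any} choice of $\Omega$ and $\alpha$. The two unknowns must then be pinned down by the single requirement $P_Y(Z)\in\mathcal{H}_Y\mathcal{M}$, which by \eqref{eq:h_space} splits into the tangency condition $\mathrm{Tr}(Y^TA_iP_Y(Z))=0$ for $i=1,\dots,m$ and the horizontality condition $P_Y(Z)^TY=Y^TP_Y(Z)$.

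For tangency I would expand $\mathrm{Tr}\bigl(Y^TA_i(Z-Y\Omega-\sum_j\alpha_jA_jY)\bigr)$: the term $\mathrm{Tr}(Y^TA_iY\Omega)$ vanishes since $Y^TA_iY$ is symmetric and $\Omega$ skew-symmetric; the cross terms $\mathrm{Tr}(Y^TA_iA_jY)$ with $i\neq j$ vanish by Assumption 2; and $\mathrm{Tr}(Y^TA_i^2Y)=\|A_iY\|_F^2$ is nonzero for any feasible $Y$ (it is the denominator already used in \eqref{eq:lag_mult}). Solving gives exactly $\alpha_i=\mathrm{Tr}(Z^TA_iY)/\mathrm{Tr}(Y^TA_i^2Y)$. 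For horizontality I would substitute the ansatz into $P_Y(Z)^TY-Y^TP_Y(Z)=0$; using $\Omega^T=-\Omega$ and $A_i^T=A_i$, the contributions $\sum_i\alpha_iY^TA_iY$ from the normal part cancel against their transposes, leaving precisely $\Omega Y^TY+Y^TY\Omega=Y^TZ-Z^TY$, the asserted Sylvester equation.

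The step requiring genuine care — and the only real obstacle — is showing this Sylvester equation is consistent with the ansatz, i.e. that it has a \emph{unique} solution and that this solution is indeed skew-symmetric. Writing $B=Y^TY$, which is symmetric positive definite because $Y$ has full rank, and $C=Y^TZ-Z^TY$, which is skew-symmetric, the linear map $\Omega\mapsto\Omega B+B\Omega$ is invertible exactly when $B$ and $-B$ have no common eigenvalue; since every eigenvalue of $B$ is strictly positive, the sum of any two is positive, so the map is invertible and $\Omega$ is uniquely determined. Transposing the equation and using $C^T=-C$ shows that $-\Omega^T$ also solves it, hence $\Omega=-\Omega^T$ by uniqueness, so the skew-symmetry assumed in the ansatz is automatically satisfied. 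This closes the argument: the constructed $P_Y(Z)$ lies in $\mathcal{H}_Y\mathcal{M}$, its difference with $Z$ lies in $\mathcal{V}_Y\mathcal{M}\oplus N_Y\bar{\mathcal{M}}$, and therefore it is the projection claimed. Everything besides the Sylvester well-posedness is a short computation built on Assumption 2 and the symmetric/skew-symmetric trace identity already quoted in the text.
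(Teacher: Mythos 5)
Your proof is correct and follows essentially the same route as the paper: posit the ansatz $P_Y(Z)=Z-Y\Omega-\sum_i\alpha_iA_iY$ and determine $\Omega$ and $\alpha$ from the two membership conditions for $\mathcal{H}_Y\mathcal{M}$, using Assumption 2 and the symmetric/skew-symmetric trace identity. You additionally supply the well-posedness of the Sylvester equation (unique solvability since $Y^TY\succ0$, and skew-symmetry of the solution by transposing and invoking uniqueness), a detail the paper's proof leaves implicit.
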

\begin{proof}
Any vector $Z \in \mathbb{R}^{n \times p}$ presents a unique
decomposition
\[Z = Z_{\mathcal{V}_Y\mathcal{M}} + Z_{\mathcal{H}_Y\mathcal{M}} +
Z_{N_Y\bar{\mathcal{M}}},
\]
where each element $Z_\mathcal{X}$ belongs to the Euclidean space
$\mathcal{X}$. The orthogonal projection $\mathcal{P}_Y(\cdot)$
extracts the component that lies in the horizontal space, i.e.,
\begin{equation*} P_Y(Z)=Z-Y \Omega -
\sum_{i=1}^m \alpha_i A_i Y,\end{equation*} with $\Omega$ a skew
symmetric matrix. The parameters $\Omega$ and $\alpha$ are
determined from the linear equations
\begin{align*}
& Y^T P_Y(Z) = P_Y(Z)^T Y,\\
& \mathrm{Tr}(Y^T A_i P_Y(Z)) = 0,  \quad i=1 \ldots m,
\end{align*}
which are satisfied by any element of the horizontal space.
\end{proof}\\

The projection $P_Y(\cdot)$ provides simple formulas to compute
derivatives of the function $\bar{f}$ (defined on the quotient
manifold) from derivatives of the function $\tilde{f}$ (defined in
the Euclidean space). The gradient corresponds to the projection
on the horizontal space of the gradient of the function
$\tilde{f}(Y)$, i.e.,
\[\mathrm{grad}\bar{f}(Y)=P_Y ( \nabla_Y \tilde{f}(Y)).\] The
Hessian applied on a direction $Z \in \mathcal{H}_Y\mathcal{M}$ is
given by \[\mathrm{Hess}\bar{f}(Y)[Z]=P_Y(D_Y(P_Y ( \nabla_Y
\tilde{f}(Y)))[Z]),\] where the directional derivative
$D_Y(\cdot)[\cdot]$ is performed in the Euclidean space $R^{n
\times
p}$.\\

Finally, a last ingredient needed by the trust-regions algorithm
in \cite{Absil07} is a \emph{retraction} $\mathcal{R}_Y(\cdot)$
that maps a search direction $Z$ (an element of the horizontal
space at $Y$) to a matrix representing a new point on the manifold
$\mathcal{M}$. Such a mapping is for example given by the
projection of the matrix $\tilde{Y}=Y+Z$ along the Euclidean space
$N_Y\bar{\mathcal{M}}$, i.e.,
 \begin{equation} \label{eq:retraction}
\mathcal{R}_Y(Z)= [\tilde{Y} + \sum_{i=1}^m \alpha_i A_i
\tilde{Y}],\end{equation} where $[\cdot]$ denotes the equivalence
class (\ref{eq:eq_class}) and the coefficients $\alpha_i$ are
chosen such that
\[\mathrm{Tr}(\bar{Y}^T A_i \bar{Y})=b_i,\]
with $\bar{Y}=\tilde{Y} + \sum_{i=1}^m \alpha_i A_i \tilde{Y}$.
Under Assumption 2, the coefficients $\alpha_i$ are easily
computed as the solution of the quadratic polynomial,
\[ \alpha_i^2 \mathrm{Tr}(\tilde{Y}^T A_i^3 \tilde{Y}) + 2 \alpha_i
\mathrm{Tr}(\tilde{Y}^T A_i^2 \tilde{Y}) + \mathrm{Tr}(\tilde{Y}^T
A_i \tilde{Y})=b_i.
\]
In case of the elliptope $\mathcal{E}$, Equation
(\ref{eq:retraction}) becomes,
\[\mathcal{R}_Y(Z)= [\mathrm{Diag}((Y+Z) (Y+Z)^T)^{-\frac{1}{2}} (Y+Z)],\]
where $\mathrm{Diag}(X)$ denotes the diagonal matrix whose
diagonal elements are those of $X$ and the brackets refer to the
equivalence class (\ref{eq:eq_class}). For the spectahedron
$\mathcal{S}$, the retraction (\ref{eq:retraction}) is given by
\[\mathcal{R}_Y(Z)=\left[\frac{Y+Z}{\sqrt{\mathrm{Tr}((Y+Z)^T (Y+Z))}}\right].
\]

The complexity of the manifold-based trust-region algorithm in the
context of program (\ref{eq:22}) is dominated by the computational
cost required to evaluate the objective $\tilde{f}(Y)$, the
gradient $\nabla_Y \tilde{f}(Y)$ and the directional derivative
$D_Y(\nabla_Y \tilde{f}(Y))[Z]$. Hence, the costly operations are
performed in the Euclidean space $R^{n \times p}$, whereas all
manifold-related operations, such as evaluating a metric, a
projection and a retraction, are of linear complexity with the
dimension $n$.

\section{Optimization on the elliptope: the max-cut SDP relaxation}
\label{sec:max_cut} A first application of the proposed
optimization method concerns the maximal cut of a graph.\\

The maximal cut of an undirected and weighted graph corresponds to
the partition of the vertices in two sets such that the sum of the
weights associated to the edges crossing between these two sets is
the largest. Computing the maximal cut of a graph is a NP-complete
problem. Several relaxations to that problem have been proposed.
The most studied one is the 0.878-approximation algorithm
\cite{Goemans_95} that solves the following semidefinite program
(SDP),
\begin{equation}
\label{eq:max_cut}
\begin{array}{ll}
\underset{X \in {\mathbb{S}}^{n}}{\min} &  \mathrm{Tr}(A X)\\
\text{s.t.} & \mathrm{diag}(X)=\mathbf{1},\\
& X \succeq 0, \\
\end{array}
\end{equation}
where $A=-\frac{1}{4} L$ with $L$ the Laplacian matrix of the
graph and $\mathbf{1}$ is a vector of all ones. This relaxation is
tight in case of a rank-one solution.\\

As previously mentioned, the elliptope,
\[\mathcal{E}=\{ X \in
{\mathbb{S}}^{n}|X\succeq 0,\mathrm{diag}(X)=\mathbf{1}\},\]
satisfies Assumption 2. Hence, Program (\ref{eq:max_cut}) is a
good candidate for the proposed framework. Using the factorization
$X=Y Y^T$, the optimization problem is defined on the quotient
manifold
$\mathcal{M}_{\mathcal{E}}=\bar{\mathcal{M}}_{\mathcal{E}}/\mathcal{O}_p$,
where
\[\bar{\mathcal{M}}_{\mathcal{E}}=\{Y \in \mathbb{R}_{*}^{n
\times p} : \mathrm{diag}(Y Y^T)=\mathbf{1} \}.\] The complexity
of Algorithm \ref{algo1} in the present context is of order $O(n^2
p)$. This complexity is dominated by both the manifold-based
optimization and the eigenvalue decomposition of the dual variable
$S_Y$, that are $O(n^2 p)$. The computational cost related to the
manifold-based optimization is
however reduced in case of matrices $A$ that are sparse.\\

Table \ref{tbl:max_cut} presents computational results obtained
with Algorithm \ref{algo1} for computing the maximal cut of a set
of graphs. The parameter $n$ denotes the number of vertices of
these graphs and corresponds thus to the size of the variable $X$
in (\ref{eq:max_cut}). More details on these graphs can be found
in \cite{Burer03} and references therein. The low-rank method is
compared with the SDPLR algorithm proposed in \cite{Burer03}, that
also exploits the low rank factorization $X=Y Y^T$ in the case of
semidefinite programs (SDP). The rank of the optimizer $Y^*$
indicates that the factorization $X=Y Y^T$ reduces significantly
the size of the search space. Concerning the computational time,
it is important to mention that Algorithm \ref{algo1} has been
implemented in Matlab, whereas a C implementation of the SDPLR
algorithm has been provided by the authors of \cite{Burer03}.
Although this renders a rigorous comparison of the computational
load difficult, Table \ref{tbl:max_cut} suggests
that both methods perform similarly.\\

\begin{table}[h] \centerline{
 \begin{tabular}{l c c c c c c c c}
\hline
 &  & & & \multicolumn{2}{c}{\textbf{Objective values}} & & \multicolumn{2}{c}{\textbf{CPU time} (sec)}\\
\textbf{Graph} & $n$& $\mathrm{Rank}(Y^*)$ && Algo. \ref{algo1}  & SDPLR & &  Algo. \ref{algo1}& SDPLR\\
\hline
   toruspm3-8-50  & 512  &   8  & &  -527.81  &  -527.81  & &    17  &     3\\
   toruspm3-15-50 & 3375 &  15  & &  -3474.79  &  -3474.76  & &  1051  &   181 \\
   torusg3-8      & 3375 &   7  & &  -3187.61  &  -3188.09  & &   375  &   228 \\
    G1            & 800  &  13  & &  -12083.2  &  -12083.1  & &    57  &    35 \\
    G11           & 800  & 5  & &  -629.16  &  -629.15  & &    53  &    15 \\
    G14           & 800  & 13  & &  -3191.57  &  -3191.53  & &    82  &    13  \\
    G22           & 2000 & 18  & &  -14136.0  &  -14135.9  & &   358  &   101  \\
    G32           & 2000 &  5  & &  -1567.58  &  -1567.57  & &   158  &    69  \\
    G35           & 2000 & 14  & &  -8014.57  &  -8014.33  & &   525  &    68 \\
    G36           & 2000 & 13  & &  -8005.60  &  -8005.80  & &   459  &   115  \\
    G58           & 5000 & 8  & &  -20111.3  &  -20135.4  & &  1881  &  1119  \\
\hline
\end{tabular}}
 \caption{Computational results of Algorithm \ref{algo1} (implemented in Matlab) and the SDPLR algorithm (implemented in C) on various graphs.} \label{tbl:max_cut}
\end{table}

Figure \ref{fig:Fig1} depicts the monotone convergence of the
Algorithm \ref{algo1} for the graph toruspm3-15-50. The number of
iterations is displayed on the bottom abscissa, whereas the top
abscissa stands for the rank $p$. Figure \ref{fig:Fig2} indicates
that the smallest eigenvalue $\lambda_{\min}$ of the dual matrix
$S_Y$ monotonically increases to zero. One notices that the
magnitude of $\lambda_{\min}$ gives some insight on the current
accuracy.

\begin{figure}[h]
\centerline{\includegraphics[height=7cm,keepaspectratio]{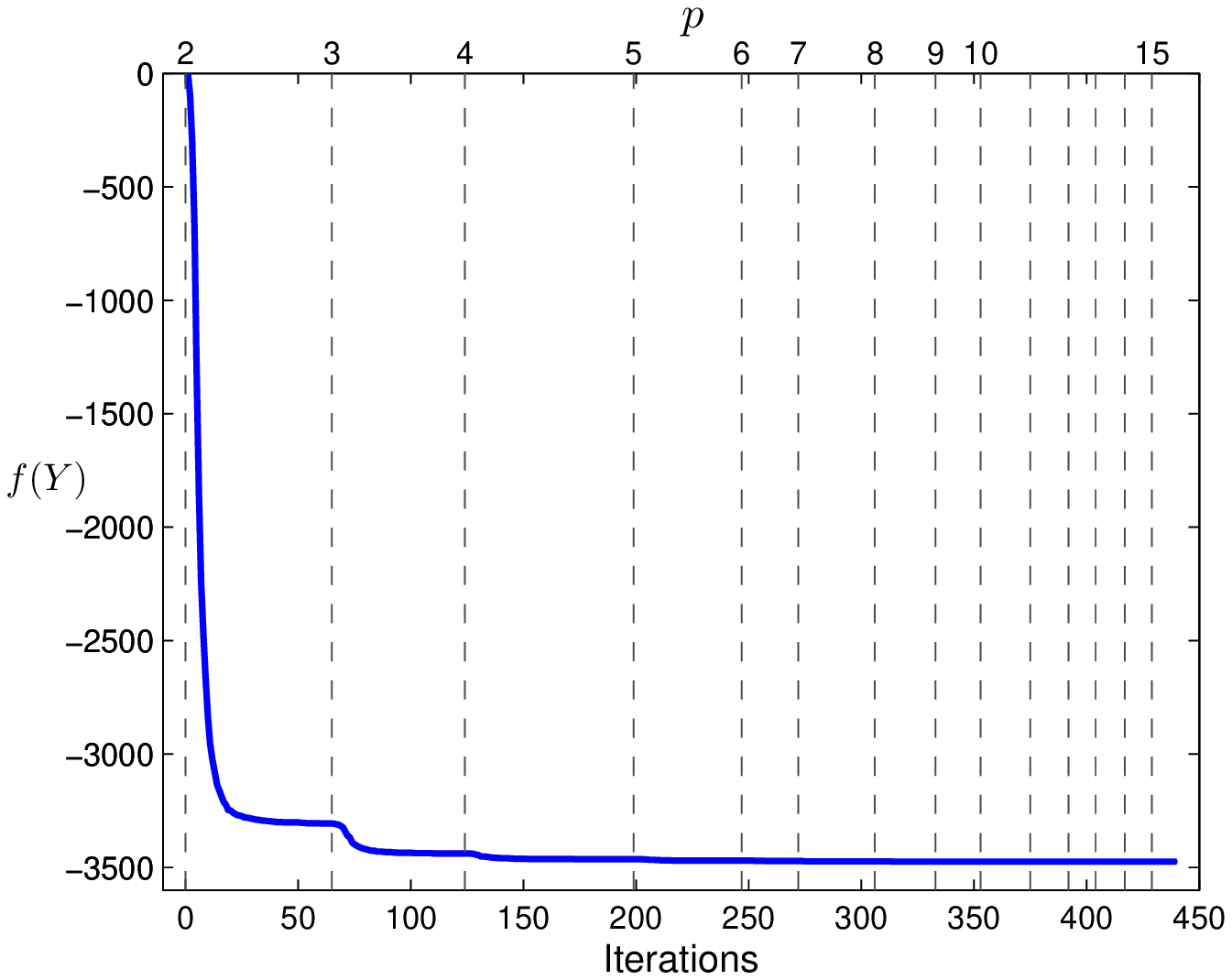}
\includegraphics[height=7cm,keepaspectratio]{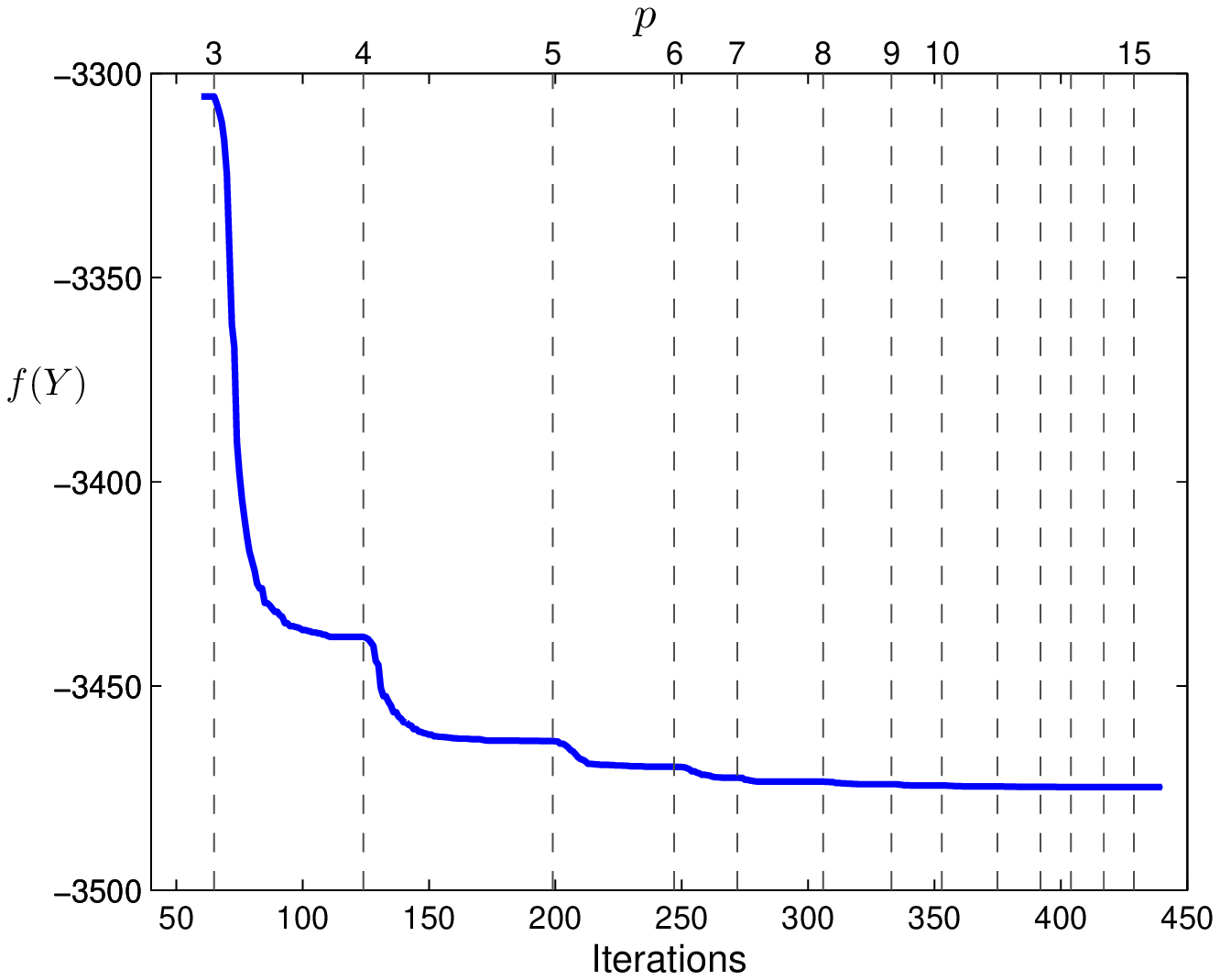}}
\caption{Monotone decrease of the cost function
$f(Y)=\mathrm{Tr}(Y^T A Y)$ through the iterations (bottom
abscissa) and with the rank $p$ (top abscissa) in the case of the
graph toruspm3-15-50.} \label{fig:Fig1}
\end{figure}

\begin{figure}[h]
\centerline{\includegraphics[height=7cm,keepaspectratio]{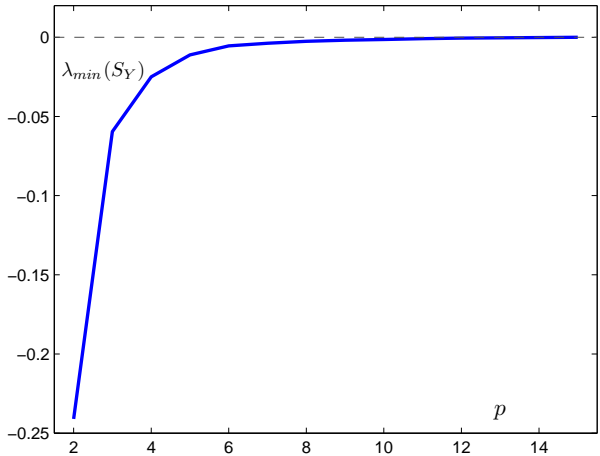}}
\caption{Evolution of the smallest eigenvalue of the matrix $S_Y$
(case of the graph toruspm3-15-50).} \label{fig:Fig2}
\end{figure}

\section{Optimization on the spectahedron: the sparse PCA problem}
\label{sec:SPCA} This section presents three nonlinear programs
that concern the sparse principal component analysis problem and
that can be efficiently solved with the proposed
low-rank optimization approach.\\

Principal component analysis (PCA) is a tool that reduces
multidimensional data to lower dimension. Given a data matrix $A
\in \mathbb{R}^{m \times n}$, the first principal component
consists in the best rank-one approximation of the matrix $A$ in
the least square sense. This decomposition is performed via
estimation of the dominant eigenvector of the empirical covariance
matrix $\Sigma=A^T A$. In many applications, it is of great
interest to get sparse principal components, i.e., components that
yield a good low-rank approximation of $A$ while involving a
limited number of nonzero elements. In case of gene expression
data where the matrix $A$ represents the expression of $n$ genes
through $m$ experiments, getting factors that involve just a few
genes, but still explain a great part of the variability in the
data, appears to be a modelling assumption closer to the biology
than the regular PCA \cite{Teschendorff06}. This tradeoff between
variance and sparsity is the central motivation of sparse PCA
methods. More details on the sparse PCA approach can be found in
\cite{Zou04,Aspremont04} and references therein.\\

Sparse PCA is the problem of finding the unit-norm vector $x \in
\mathbb{R}^n$ that maximizes the Rayleigh quotient of the matrix
$\Sigma=A^T A$ but contains a fixed number of zeros, i.e.,
\begin{equation}
\label{eq:spca_formulation0}
\begin{array}{ll}
\underset{x \in {\mathbb{R}}^{n}}{\max} & x^T \Sigma x\\
\text{s.t.} & x^T x=1, \\ & \mathrm{Card}(x)\leq k,\\
\end{array}
\end{equation}
where $k$ is an integer with $1 \leq k \leq n$ and
$\mathrm{Card}(x)$ is the cardinality of $x$, i.e., the number of
non zero components. Finding the optimal sparsity pattern of the
vector $x$ is of combinatorial complexity. Several algorithms have
been proposed in the literature that find an approximate solution
to (\ref{eq:spca_formulation0}). We refer to \cite{Aspremont04}
for references on these methods. Let us finally mention that the
data matrix $A$ does not necessarily have to present a sparse
pattern. In the context of compressed sensing, for example, one
needs to compute the sparse principal component of a matrix $A$
that is
full and sampled from a gaussian distribution \cite{Aspremont07}.\\

Recently, two convex relaxations have been derived that require to
minimize some nonlinear convex functions on the spectahedron $
\mathcal{S}= \{X \in \mathbb{S}^{m} | X \succeq 0,
\mathrm{Tr}(X)=1\}.$ Both of these relaxations consider a
variation of (\ref{eq:spca_formulation0}), in which the
cardinality appears as a penalty instead of a constraint, i.e.,
\begin{equation}
\label{eq:spca_formulation}
\begin{array}{ll}
\underset{x \in {\mathbb{R}}^{n}}{\max} & x^T \Sigma x- \rho \mathrm{Card}(x)\\
\text{s.t.} & x^T x=1, \\
\end{array}
\end{equation}
with the parameter $\rho \geq 0$.\\

\subsection{A first convex relaxation to the sparse PCA problem}
\label{sec:DSPCA} In \cite{Aspremont04}, Problem
(\ref{eq:spca_formulation}) is relaxed to a convex program in two
steps. First, a convex feasible set is obtained by lifting the
unit norm vector variable $x$ into a matrix variable $X$ that
belongs to the spectahedron, i.e.,
\begin{equation}
\label{eq:dspca1}
\begin{array}{ll}
\underset{X \in {\mathbb{S}}^{n}}{\max} & \mathrm{Tr}( \Sigma X) - \rho \mathrm{Card}(X)\\
\text{s.t.} & \mathrm{Tr}(X)=1,\\ & X \succeq 0.
\end{array}
\end{equation}
The relaxation (\ref{eq:dspca1}) is tight for rank-one matrices.
In such cases, the vector variable $x$ in
(\ref{eq:spca_formulation}) is related to the matrix variable $X$
according to $X=x x^T$. Then, for (\ref{eq:dspca1}) to be convex,
the cardinality penalty is replaced by a convex $l_1$ penalty,
i.e.,
\begin{equation}
\label{eq:dspca2}
\begin{array}{ll}
\underset{X \in {\mathbb{S}}^{n}}{\max} & \mathrm{Tr}( \Sigma X) - \rho \sum_{i,j} |X_{ij}| \\
\text{s.t.} & \mathrm{Tr}(X)=1,\\
& X \succeq 0.
\end{array}
\end{equation}
Finally, a smooth approximation to (\ref{eq:dspca2}) is obtained
by replacing the absolute value by the differentiable function
$h_{\kappa}(x)= \sqrt{x^2+\kappa^2}$ with the parameter $\kappa$
that is very small. A too small $\kappa$ might, however, lead to
ill-conditioned Hessians and thus to numerical problems.\\

The convex program,
\begin{equation}
\label{eq:dspca3}
\begin{array}{ll}
\underset{X \in {\mathbb{S}}^{n}}{\max} & \mathrm{Tr}( \Sigma X) - \rho \sum_{i,j} h_{\kappa}(X_{ij}) \\
\text{s.t.} & \mathrm{Tr}(X)=1,\\
& X \succeq 0,
\end{array}
\end{equation}
fits within the framework (\ref{eq:1}). The variable $X$ is thus
factorized in the product $Y Y^T$ and the optimization is
performed on the quotient manifold
$\mathcal{M}_{\mathcal{S}}=\bar{\mathcal{M}}_{\mathcal{S}}/
\mathcal{O}_p$ where
\[ \bar{\mathcal{M}}_{\mathcal{S}}=\{Y \in \mathbb{R}_{*}^{n \times p} :
\mathrm{Tr}(Y^T Y)=1 \}.\] The computational complexity of
Algorithm \ref{algo1} in the context of program (\ref{eq:dspca3})
is of order $O(n^2 p)$. It should be mentioned that the DSPCA
algorithm derived in \cite{Aspremont04} and that has been tuned to
solve program
(\ref{eq:dspca2}) features a complexity of order $O(n^3)$.\\

Figure \ref{fig:DSPCA1} illustrates the monotone convergence of
Algorithm \ref{algo1} on a random gaussian matrix $A$ of size $50
\times 50$. The sparsity weight factor $\rho$ has been chosen to 5
and the smoothing parameter $\kappa$ equals $10^{-4}$. The maximum
of the nonsmooth cost function in (\ref{eq:dspca2}) has been
computed with the DSPCA algorithm \cite{Aspremont04}. One first
notices that the smooth approximation in (\ref{eq:dspca3})
slightly underestimates the nonsmooth cost function
(\ref{eq:dspca2}). The maximizers of both (\ref{eq:dspca2}) and
(\ref{eq:dspca3}) are, however, almost identical. Then, we should
mention that all numerical experiments performed with the DSPCA
algorithm for solving (\ref{eq:dspca2}) resulted in a rank one
matrix. So, the solution of (\ref{eq:dspca3}) is expected to be
close to rank one. This explains why the improvement in terms of
objective value is very small for ranks larger than one. A
heuristic to speed up the computations would thus consist in
computing an approximate rank one solution of (\ref{eq:dspca3}),
i.e., Algorithm \ref{algo1} is stopped after the iteration $p=1$.
Finally, on the right hand plot, Figure \ref{fig:DSPCA1}
highlights the smallest eigenvalue $\lambda_{\min}$ of the matrix
$S_Y$ as a way to monitor the
convergence.\\
\begin{figure}[h]
\centerline{\includegraphics[height=7cm,keepaspectratio]{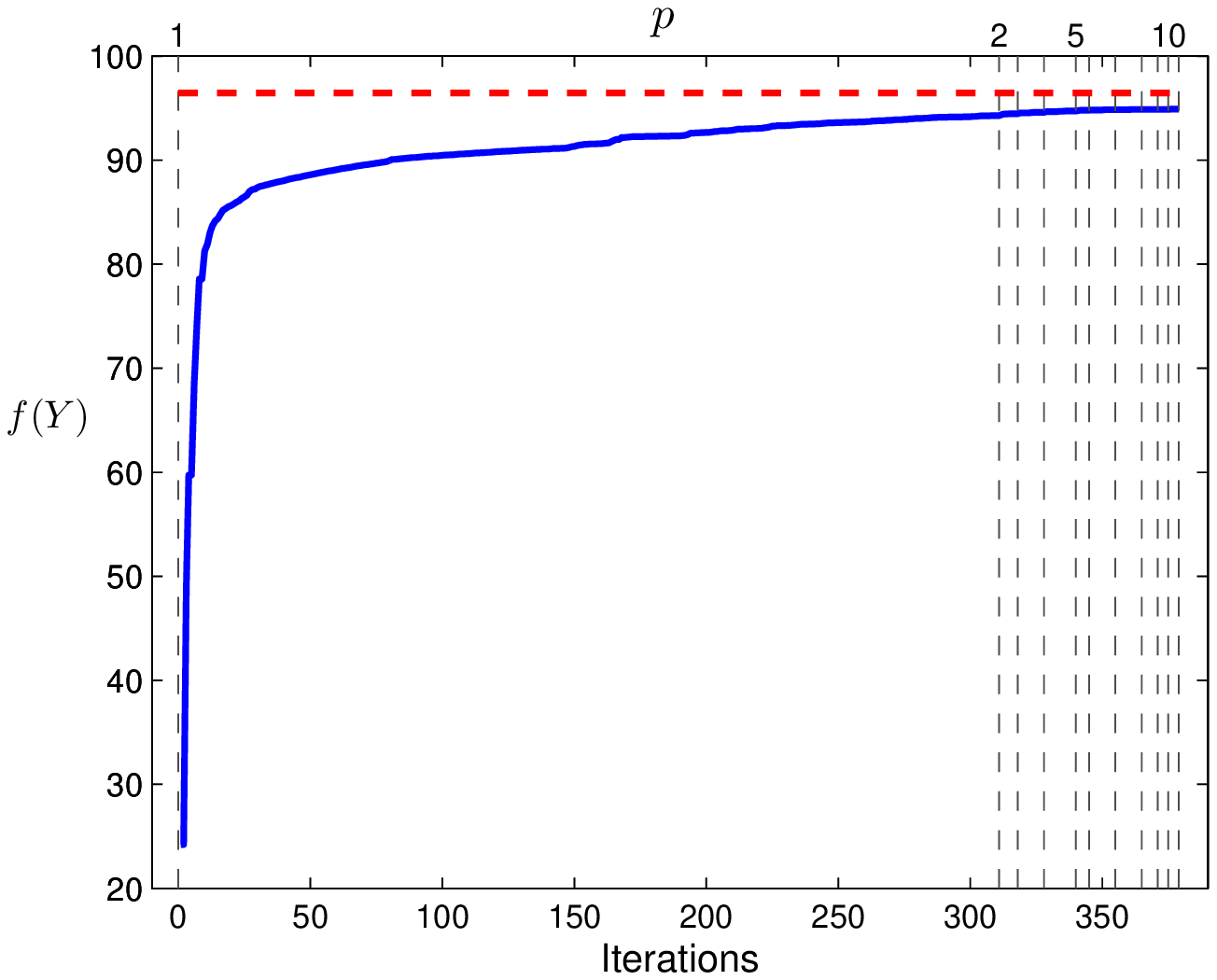}
\includegraphics[height=7cm,keepaspectratio]{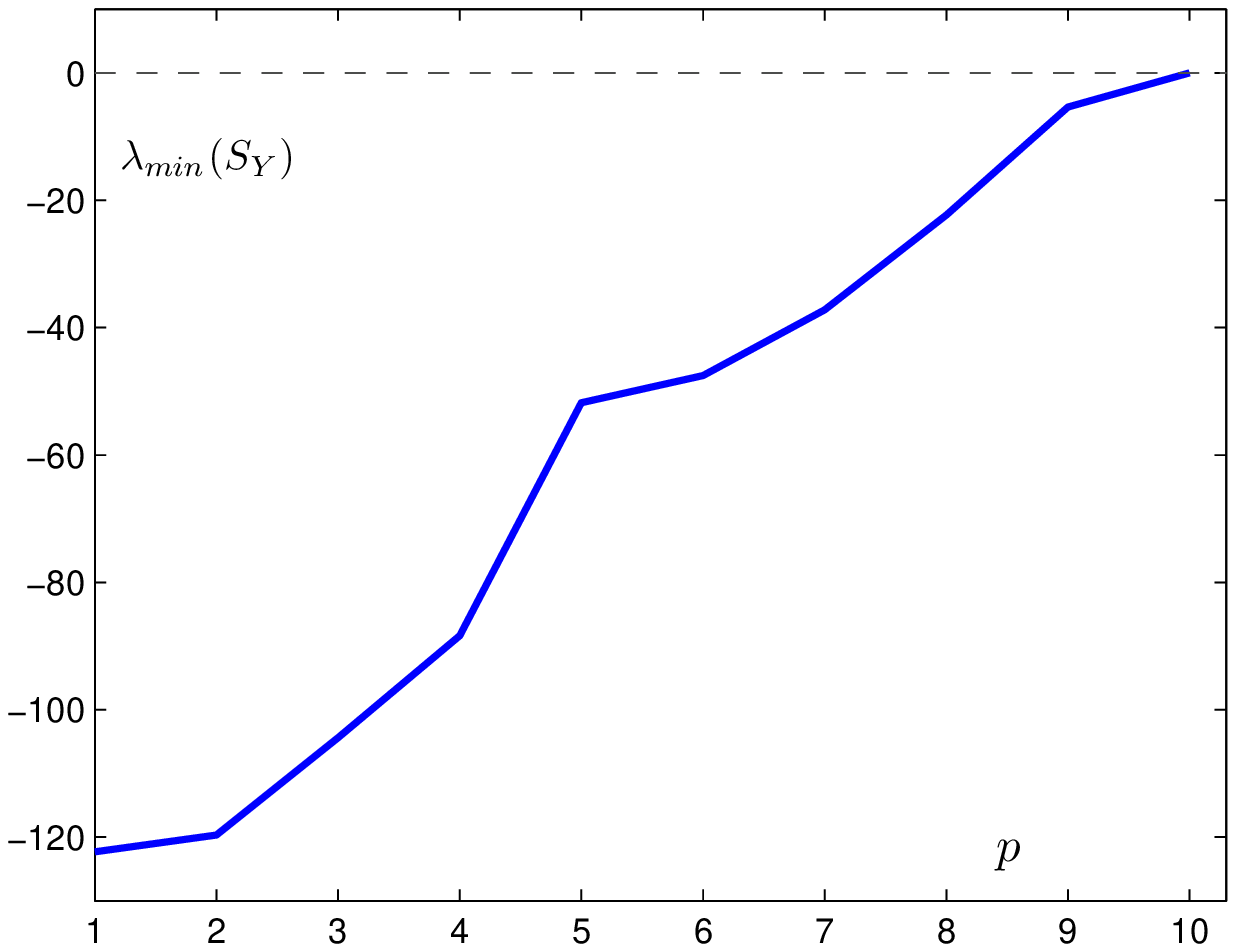}}
\caption{Left: monotone increase of $f(Y)=\mathrm{Tr}( Y^T \Sigma
Y) - \rho \sum_{i,j} h_{\kappa}((Y Y^T)_{ij})$ through the
iterations (bottom abscissa) and with the rank $p$ (top abscissa).
The dashed horizontal line represents the maximum of the nonsmooth
cost function in (\ref{eq:dspca2}). Right: evolution of the
smallest eigenvalue of $S_Y$.} \label{fig:DSPCA1}
\end{figure}

Figure \ref{fig:DSPCA2} provides some insight on the computational
time required by a Matlab implementation of Algorithm \ref{algo1}
that solves (\ref{eq:dspca3}). Square gaussian matrices $A$ have
been considered, i.e., $m=n$. On the left hand plot, Algorithm
\ref{algo1} is compared with the above mentioned heuristic and the
DSPCA algorithm. The right hand plot highlights the quadratic
complexity of Algorithm \ref{algo1} with the problem size $n$.

\begin{figure}[h]
\centerline{\includegraphics[height=7cm,keepaspectratio]{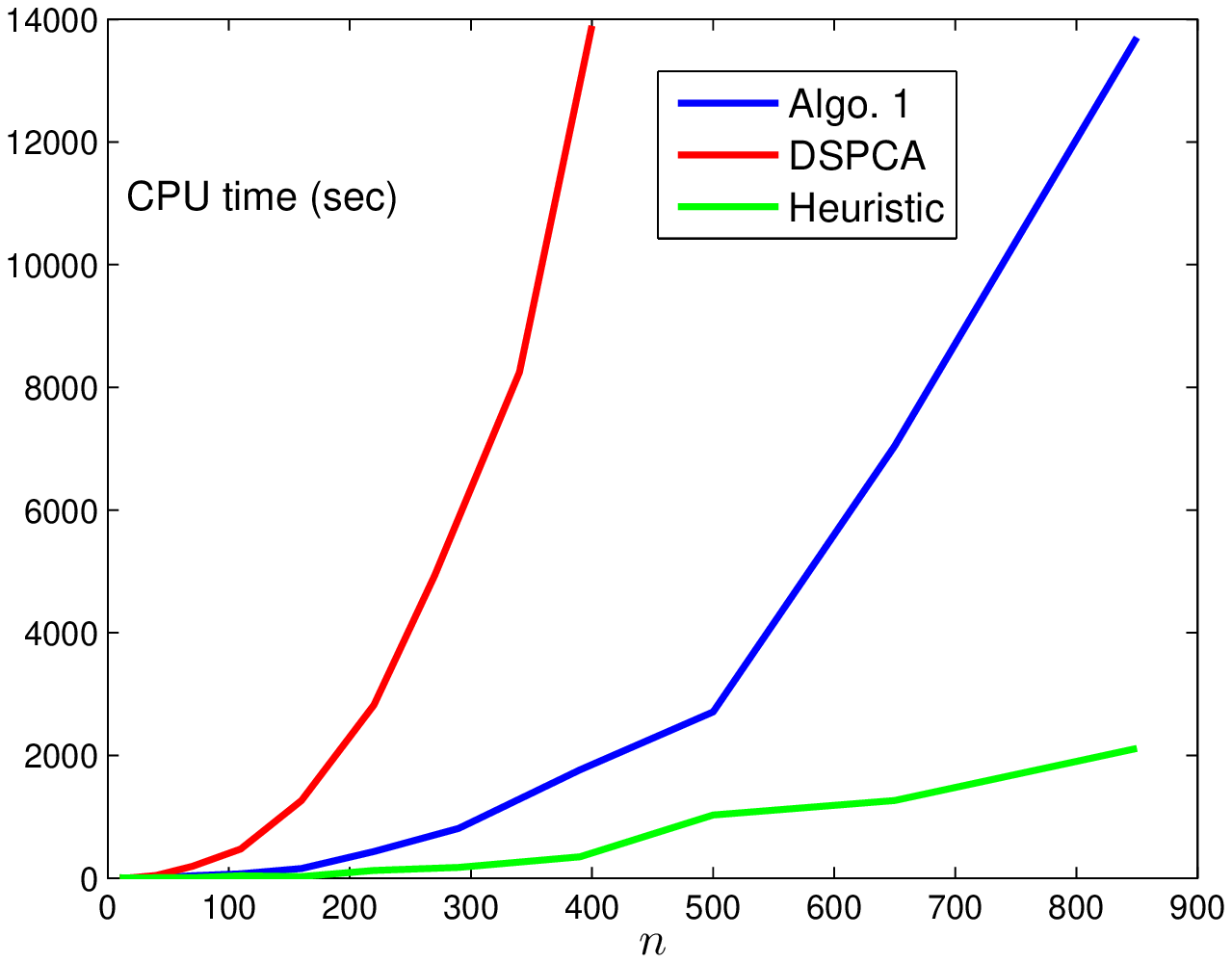}
\includegraphics[height=7cm,keepaspectratio]{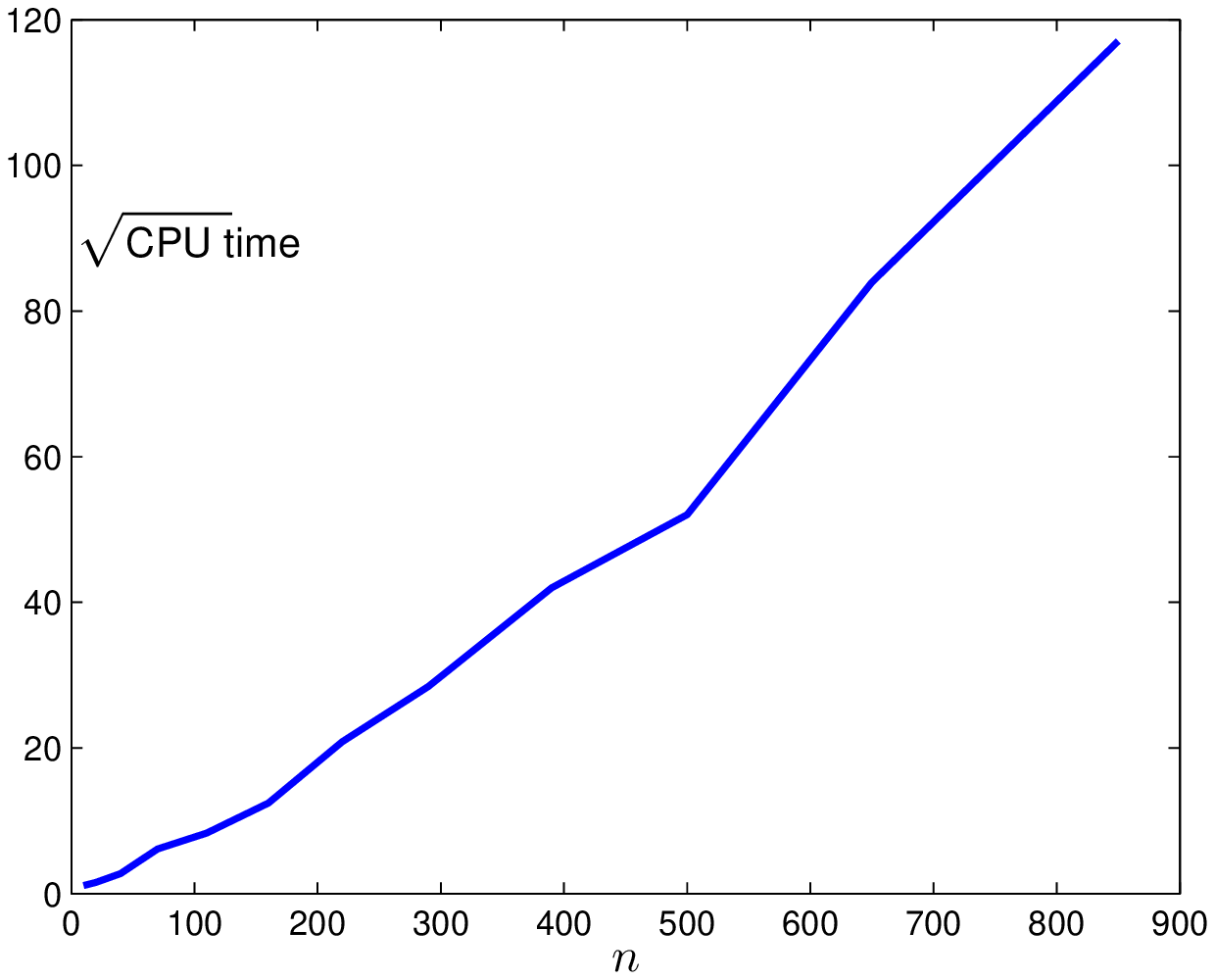}}
\caption{Right: Computational time for solving (\ref{eq:dspca3})
versus the problem size in the case $p=n$. Left: Square root of
the computational time versus $n$.} \label{fig:DSPCA2}
\end{figure}

\subsection{A second convex relaxation to the sparse PCA problem}
 Problem (\ref{eq:spca_formulation}) is shown in \cite{Aspremont07}
 to be strictly equivalent to
\begin{equation}
 \label{eq:spca1}
\begin{array}{ll}
\underset{\substack{z \in \mathbb{R}^{m}}}{\max}& \sum_{i=1}^n
((a_i^T
z)^2-\rho)_+, \\
\text{s.t.} & z^T z=1,
\end{array}
\end{equation}
where $a_i$ is the $i^{\mathrm{th}}$ column of $A$ and the
function $x_+$ corresponds to $\max(0,x)$. The auxiliary variable
$z$ enables to reconstruct the vector $x$: the component $x_i$ is
active if $(a_i^T z)^2 - \rho \geq 0$. As for the relaxation
previously derived in Section \ref{sec:DSPCA}, the vector $z$ is
lifted into a matrix $Z$ of the spectahedron,
\begin{equation}
\label{eq:spca2}
\begin{array}{cll}
&\underset{Z \in \mathbb{S}^{m}}{\max} & \sum_{i=1}^n \mathrm{Tr}(a_i^T Z a_i-\rho)_+ \\
 & \text{s. t.} & \mathrm{Tr}(Z)=1,\\
&  & Z \succeq 0,\\
\end{array}
\end{equation}\\
This program is equivalent to (\ref{eq:spca1}) in case of rank one
matrices $Z=z z^T$. Program (\ref{eq:spca2}) maximizes a convex
function and is thus nonconvex. The authors of \cite{Aspremont07}
have shown that, in case of rank one matrices $Z$, the convex cost
function in (\ref{eq:spca2}) equals the concave function
\begin{equation} \label{eq:ccv} f(Z)=\sum_{i=1}^n
\mathrm{Tr}(Z^{\frac{1}{2}}(a_i^T a_i-\rho
I)Z^{\frac{1}{2}})_+,\end{equation} where the function
$\mathrm{Tr}(X)_+$ stands for the sum of the positive eigenvalues
of $X$. This gives the following nonsmooth convex relaxation of
(\ref{eq:spca_formulation}),
\begin{equation}
\label{eq:spca3}
\begin{array}{cll}
&\underset{Z \in \mathbb{S}^{m}}{\max} & \sum_{i=1}^n \mathrm{Tr}(Z^{\frac{1}{2}}(a_i^T a_i-\rho I)Z^{\frac{1}{2}})_+ \\
 & \text{s. t.} & \mathrm{Tr}(Z)=1,\\
&  & Z \succeq 0,\\
\end{array}
\end{equation}\\
that is tight in case of rank-one solutions. This program is
solved via the factorization $Z=Y Y^T$ and optimization on the
quotient manifold $\mathcal{M}_{\mathcal{S}}$. In the case $Z=Y
Y^T$, function (\ref{eq:ccv}) equals
\[
f(Y)=\sum_{i=1}^n \mathrm{Tr}(Y^T (a_i^T a_i-\rho I) Y)_+,\] which
is a spectral function \cite{Aspremont07}. The evaluation of the
gradient and Hessian of $f(Y)$ are based on explicit formulae
derived in the papers \cite{Lewis96,Lewis01} to compute the first
and second derivatives of a spectral function. Since we are not
aware of any smoothing method that would preserve the convexity of
(\ref{eq:spca3}), Algorithm \ref{algo1} has been directly applied
in this nonsmooth context. In practice, no trouble has been
observed since all numerical simulations converge successfully to
the solution of (\ref{eq:spca3}). The computational complexity of
Algorithm \ref{algo1} for solving (\ref{eq:spca3}) is of order
$O(n m^2 p)$. The convex relaxation (\ref{eq:spca3}) of the sparse
PCA problem (\ref{eq:spca_formulation}) appears thus well suited
to treat large scale data with $m\ll n$, such as gene
expression data are.\\

Figure \ref{fig:Fig3} displays the convergence of Algorithm
\ref{algo1} for solving (\ref{eq:spca3}) with a random gaussian
matrix $A$ of size $m=100$ and $n=500$. The sparsity parameter
$\rho$ is chosen at 5 percent of the upper bound
$\bar{\rho}=\underset{i}{\max} \; a_i^T a_i$, that is derived in
\cite{Aspremont07}. The smallest eigenvalue $\lambda_{\min}$ of
the matrix $S_Y$ presents a monotone decrease once it gets
sufficiently close to zero.\\

\begin{figure}[h]
\centerline{\includegraphics[height=7cm,keepaspectratio]{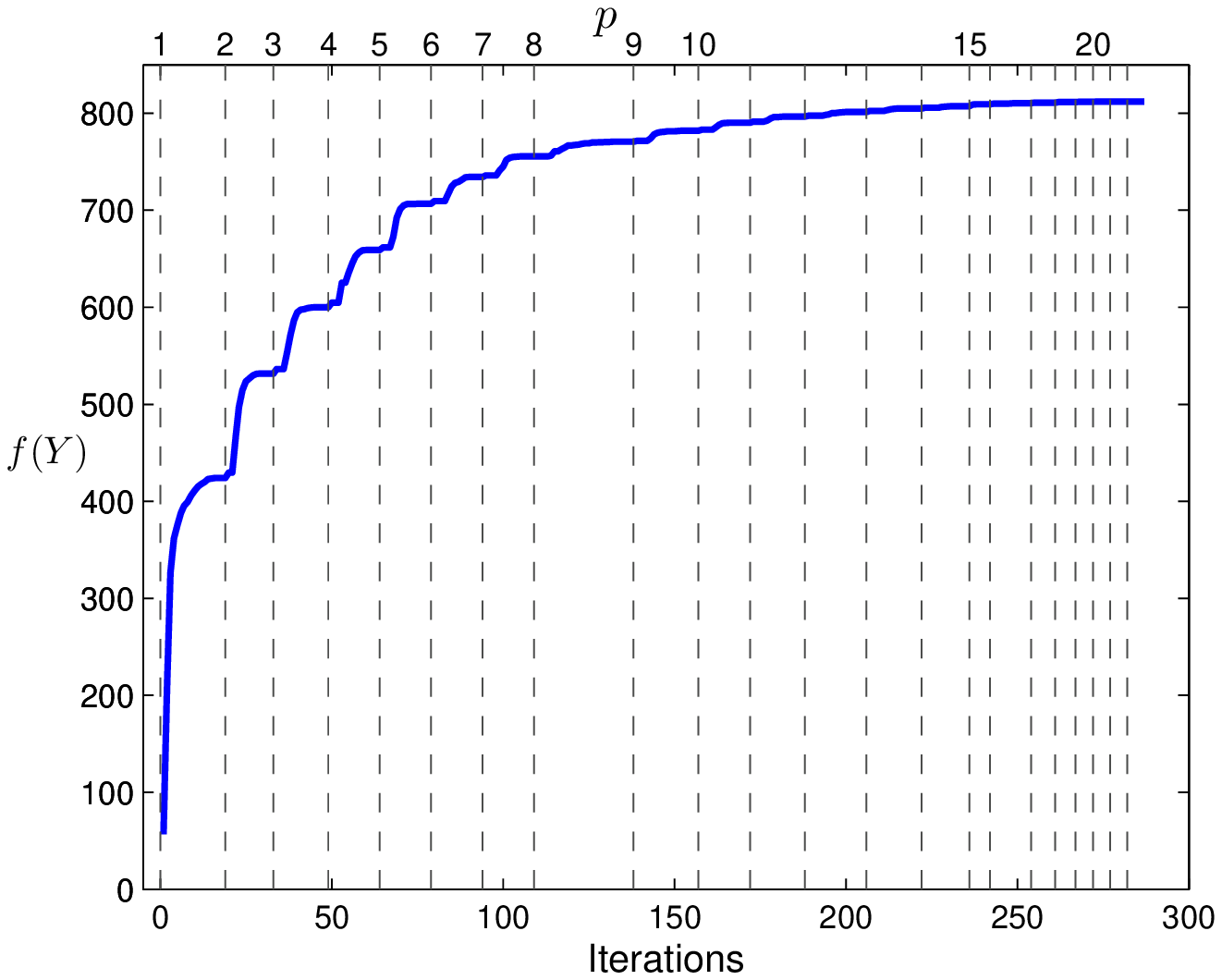}
\includegraphics[height=7cm,keepaspectratio]{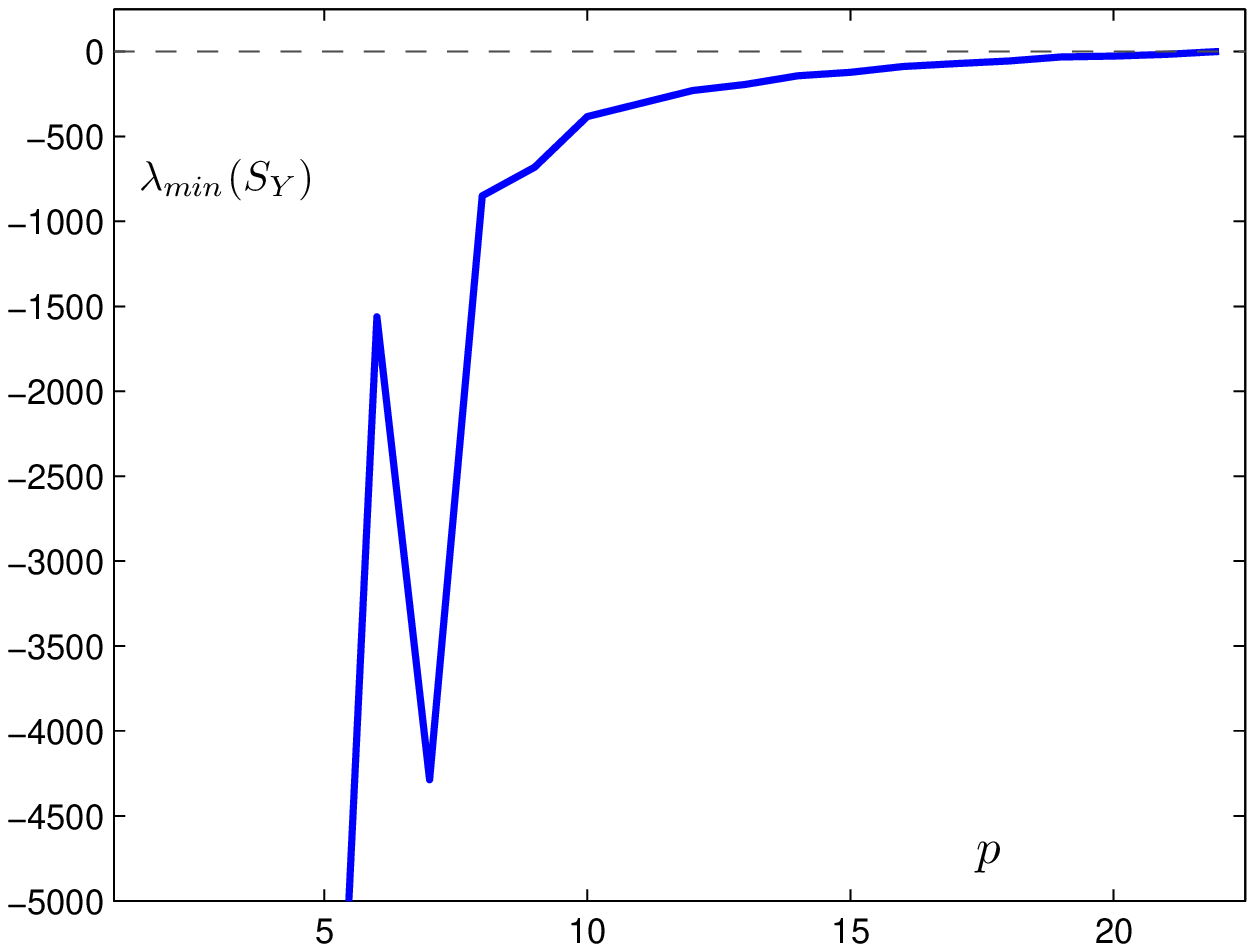}}
\caption{Left: monotone increase of the cost function through the
iterations (bottom abscissa) and with the rank $p$ (top abscissa).
Right: evolution of the smallest eigenvalue of $S_Y$.}
\label{fig:Fig3}
\end{figure}

Figure \ref{fig:SPCA_CPU} plots the CPU time required by a Matlab
implementation of Algorithm \ref{algo1} versus the dimension $n$
of the matrix $A$. The dimension $p$ has been fixed at 50 and $A$
is chosen according to a gaussian distribution. Figure
\ref{fig:SPCA_CPU} illustrates the linear complexity in $n$ of the
proposed sparse PCA method.

\begin{figure}[h]
\centerline{\includegraphics[height=7cm,keepaspectratio]{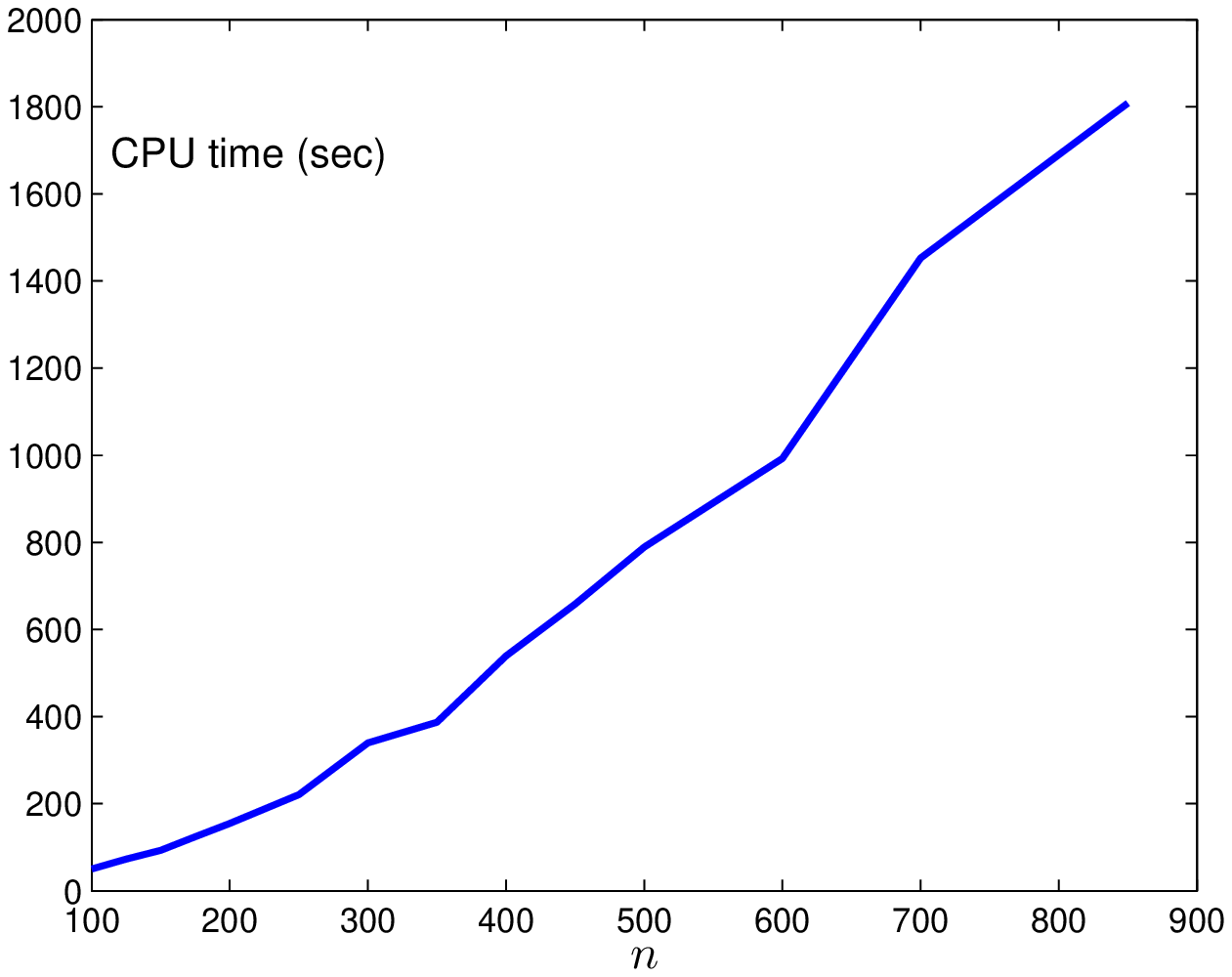}}
\caption{Computational time for solving (\ref{eq:spca3}) versus
the problem size $n$ in the case $p=50$.} \label{fig:SPCA_CPU}
\end{figure}

\subsection{Projection on rank one matrices}
Both convex relaxations (\ref{eq:dspca3}) and (\ref{eq:spca3}) are
derived from the reformulation of a problem defined on unit norm
vectors $x$ into a problem with matrices $X=x x^T$, which is an
equivalent formulation if $X$ belongs to the spectahedron and has
rank one. Within the derivation of both convex relaxations, the
rank one condition has been dropped. The solutions of
(\ref{eq:dspca3}) and (\ref{eq:spca3}) are therefore expected to
present a rank larger than one.\\

As previously mentioned, all numerical experiments performed with
the DSPCA algorithm \cite{Aspremont04}, which solves the nonsmooth
convex program (\ref{eq:dspca2}), led to a rank one solution.
Thus, the solution of the smooth convex relaxation
(\ref{eq:dspca2}) is expected to tend to a rank one matrix once
the smoothing parameter $\kappa$ gets sufficiently close to zero.
Figure \ref{fig:Randing1} illustrates this fact. It should be
mentioned that a matrix $X$ of the spectahedron has nonnegative
eigenvalues whose sum is one. Hence, $X$ is rank one if and only
if its largest eigenvalue equals one. In order to deal with
potential numerical problems in case of very small $\kappa$, we
sequentially solve problems of the type of (\ref{eq:dspca3}) with
a decreasing value of $\kappa$. The solution of each problem
initializes a new program (\ref{eq:dspca3}) with a reduced
$\kappa$.\\
\begin{figure}[h]
\centerline{\includegraphics[height=7cm,keepaspectratio]{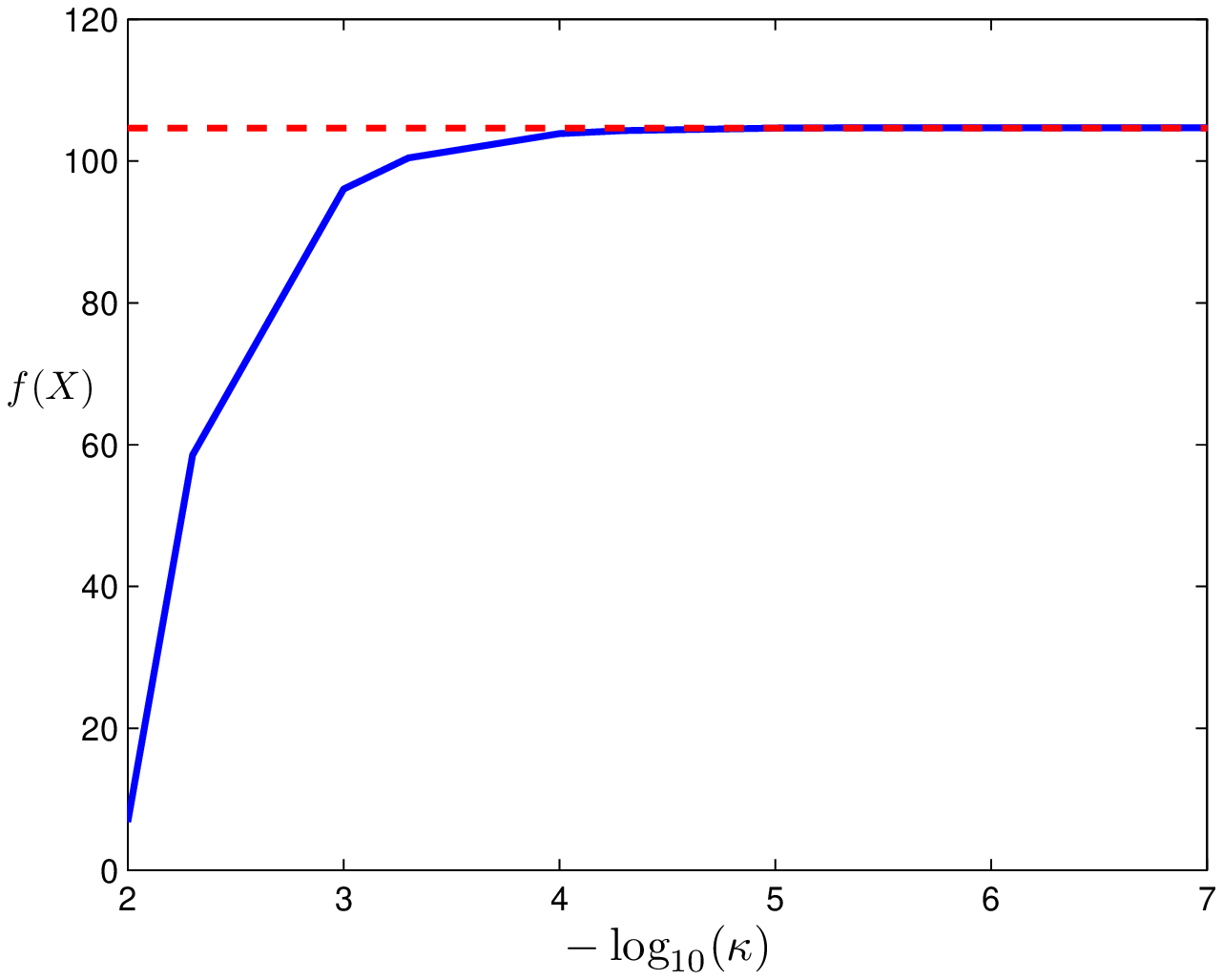}
\hspace{-1cm}
\includegraphics[height=7cm,keepaspectratio]{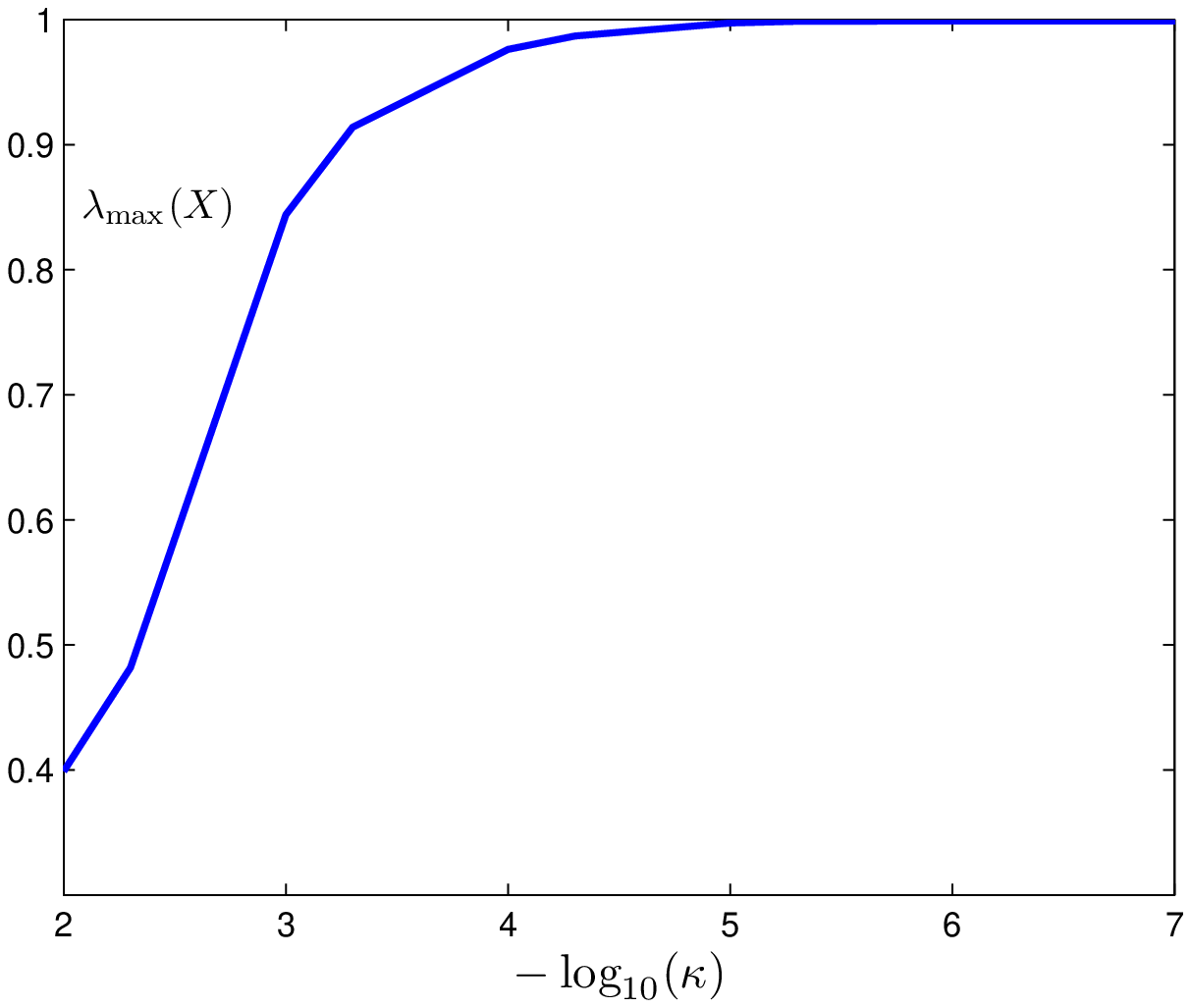}}
\caption{Left: evolution of the maximum the cost in
(\ref{eq:dspca3}) with the smoothing parameter $\kappa$. The
dashed horizontal line represents the maximum of the nonsmooth
cost function in (\ref{eq:dspca2}). Right: evolution of the
largest eigenvalue of the solution of (\ref{eq:dspca3}).}
\label{fig:Randing1}
\end{figure}

In contrast to (\ref{eq:dspca2}), the convex relaxation
(\ref{eq:spca3}) usually provides solutions with a rank that is
larger than one. The solution matrix $X$ has to be projected onto
the rank one matrices of the spectahedron in order to recover a
vector variable $x$. A convenient heuristic is to compute the
dominant eigenvector of the matrix $X$. A vector $x$ that achieves
a higher objective value in (\ref{eq:spca_formulation}) might,
however, be obtained with the following homotopy method. We
consider the program
\begin{equation}
\label{eq:randing}
\begin{array}{cll}
&\underset{Z \in \mathbb{S}^{m}}{\max} & \mu f_{cvx}(Z) + (1-\mu) f_{ccv}(Z) \\
 & \text{s. t.} & \mathrm{Tr}(Z)=1,\\
&  & Z \succeq 0,\\
\end{array}
\end{equation}\\
with the concave function,
\[  f_{ccv}(Z)=\sum_{i=1}^n \mathrm{Tr}(Z^{\frac{1}{2}}(a_i^T a_i-\rho I)Z^{\frac{1}{2}})_+\]
and the convex function,
\[ f_{cvx}(Z)=\sum_{i=1}^n \mathrm{Tr}(a_i^T Z a_i-\rho)_+,\]
and for the parameter $0 \leq \mu \leq 1$. As previously
mentioned, in case of rank one matrices $Z=zz^T$, the functions
$f_{ccv}(Z)$ and $f_{cvx}(Z)$ are identical and equal to the cost
function (\ref{eq:spca1}). For $\mu =0$, program
(\ref{eq:randing}) is the convex relaxation (\ref{eq:spca3}) and
the solution has typically a rank larger than one. If $\mu = 1$,
solutions of (\ref{eq:randing}) are extreme points of the
spectahedron, i.e., rank one matrices. Hence, by solving a
sequence of problems (\ref{eq:randing}) with the parameter $\mu$
that increases from zero to one, the solution of (\ref{eq:spca3})
is projected onto the rank one matrices of the spectahedron.
Program (\ref{eq:randing}) is no longer convex once $\mu > 0$. The
optimization method proposed in this paper then converges towards
a local maximizer of (\ref{eq:randing}).\\

Figure \ref{fig:Randing2} presents computational results obtained
on a random gaussian matrix $A \in \mathbb{R}^{150 \times 50}$.
This projection method is compared with the usual approach that
projects the symmetric positive semidefinite matrix $Z$ onto its
dominant eigenvector, i.e., $\tilde{Z} = z z^T$ where $z$ is the
unit-norm dominant eigenvector of $Z$. Let $f_{EVD}(Z)$ denotes
the function,\footnote{EVD stands for eigenvalue decomposition.}
\[f_{EVD}(Z)= f_{ccv}(\tilde{Z}) = f_{cvx}(\tilde{Z}).\]
Figure \ref{fig:Randing2} uses the maximum eigenvalue of a matrix
$Z$ of the spectahedron to monitor its rank. As previously
mentioned, any rank one matrix $Z$ of the spectahedron satisfies
$\lambda_{\max}(Z)=1$. The continuous plots of Figure
\ref{fig:Randing2} display the evolution of the functions
$f_{ccv}(Z)$ and $f_{EVD}(Z)$ during the resolution of the convex
program (\ref{eq:spca3}), i.e., $\mu = 0$ in (\ref{eq:randing}).
Point $A$ represents the solution obtained with Algorithm
\ref{algo1} by solving (\ref{eq:spca3}) at the rank $p=1$, whereas
$B$ and $B'$ stands for the exact solution of (\ref{eq:spca3}),
which is of rank larger than one. The dashed plots illustrate the
effect of the parameter $\mu$, that is linearly increased by steps
of 0.05 between the points $B$ and $C$ . For a sufficiently large
$\mu$, program (\ref{eq:randing}) presents a rank one solution,
which is displayed by the point C. One clearly notices that the
objective function of the original problem (\ref{eq:spca1}), which
equals $f_{EVD}(Z)$, is larger at $C$ than at $B'$. Hence, the
projection method based on (\ref{eq:randing}) outperforms the
projection based on the eigenvalue decomposition
of $Z$ in terms of achieved objective value.\\

\begin{figure}[h]
\centerline{\includegraphics[height=7cm,keepaspectratio]{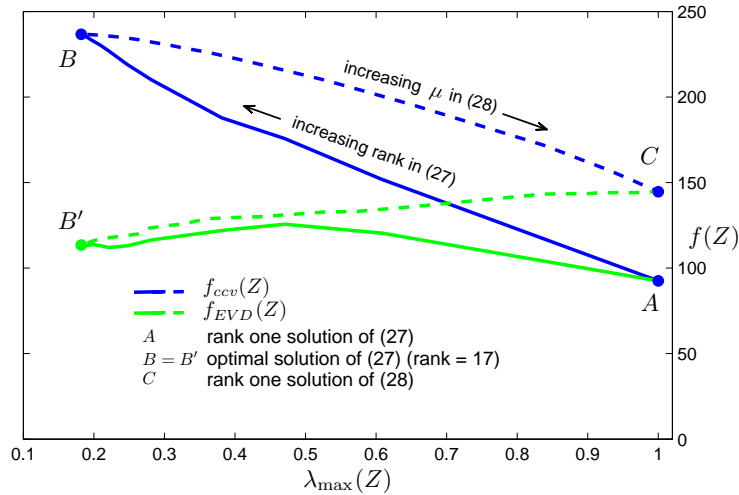}}
\caption{Evolution of the functions $f_{ccv}(Z)$ and $f_{EVD}(Z)$
in two situations. Continuous plots: resolution of the convex
program (\ref{eq:spca3}) ($\mu = 0$ in (\ref{eq:randing})). Dashed
plots: projection of the solution of (\ref{eq:spca3}) on a rank
one matrix by gradual increase of $\mu$.} \label{fig:Randing2}
\end{figure}

\section{Conclusion}
We have proposed an algorithm for solving a nonlinear convex
program that is defined in terms of a symmetric positive
semidefinite matrix and that is assumed to present a low-rank
solution. The proposed algorithm solves a sequence of nonconvex
programs of much lower dimension than the original convex one. It
presents a monotone convergence towards the sought solution, uses
superlinear second order optimization methods and provides a tool
to monitor the convergence, which enables to evaluate the quality
of approximate solutions for the original convex problem. The
efficiency of the approach has been illustrated on several
applications: the maximal cut of a graph and various problems in
the context of sparse principal component analysis. The proposed
algorithm can also deal with problems featuring a nonconvex cost
function. It then converges toward a local optimizer of the
problem.

\newcommand{\etalchar}[1]{$^{#1}$}
\providecommand{\bysame}{\leavevmode\hbox
to3em{\hrulefill}\thinspace}
\providecommand{\MR}{\relax\ifhmode\unskip\space\fi MR }
% \MRhref is called by the amsart/book/proc definition of \MR.
\providecommand{\MRhref}[2]{%
  \href{http://www.ams.org/mathscinet-getitem?mr=#1}{#2}
} \providecommand{\href}[2]{#2}

\end{document}